\newcommand{\mf}{\mathfrak}
\newcommand{\g}{\mf{g}}
\newcommand{\h}{\mf{h}}
\renewcommand{\l}{\lambda}
\numberwithin{equation}{section}
\newtheorem{theorem}{Theorem}[section]
\newtheorem{proposition}[theorem]{Proposition}
\newtheorem{lemma}[theorem]{Lemma}
\newtheorem{corollary}[theorem]{Corollary}
\theoremstyle{remark}
\newtheorem{remark}[theorem]{Remark}
\newtheorem{definition}[theorem]{Definition}
\renewcommand{\a}{\alpha}
\newcommand{\C}{\mathbb{C}}
\newcommand{\D}{\Delta}
\newcommand{\Dp}{\Delta^+}
\begin{document}

\title{On some modules of covariants for a reflection group}

\date{\today}
\dedicatory {To Ernest Vinberg on the occasion of his 80th birthday}

\author{Corrado De Concini,
Paolo Papi}\address{Dipartimento di Matematica, Sapienza Universit\`a di Roma, P.le A. Moro 2,
00185, Roma, Italy; 
}
\email{deconcin@mat.uniroma1.it, papi@mat.uniroma1.it }
  \keywords{Exterior algebra, covariants,   small representation, Dunkl operators.}
  \subjclass[2010]{17B20}

\date{\today}
\begin{abstract} Let $\mathfrak g$ be a simple Lie algebra with  Cartan subalgebra $\h$ and Weyl group $W$. We build up a graded isomorphism   $(\bigwedge\h\otimes\mathcal H\otimes \h)^W\to (\bigwedge \g\otimes \g)^\g$ of 
 $(\bigwedge \g)^\g\cong S(\h)^W$-modules, where  $\mathcal H$ is the space of $W$-harmonics.
In this way we prove an enhanced form of a conjecture of Reeder for the adjoint representation.
\end{abstract}

\maketitle

\section{Introduction}
Let $W$ be a  finite irreducible real reflection group, and let $S$ be a set of Coxeter generators. Let $V$ be the  euclidean space  affording a reflection representation of $W$.
Consider the ring $A$ of complex valued polynomial functions on $V$. Let $2\leq d_1\leq d_2\leq\cdots\leq d_r,\,r=\dim V$ be the degrees of any set  of homogeneous generators $\psi_1,\ldots ,\psi_r$ of the polynomial ring $A^W$. Now consider the ideal $J$ of $A$ generated by  $\psi_1,\ldots ,\psi_r$  and set $\mathcal H=A/J$.  \par
Let  $\mathcal W=\bigwedge V\otimes A$ be the Weil algebra, which we regard as graded by  
$\deg(q\otimes k)=\deg(q)+2\deg(k)$ for $q\in \bigwedge V$ and $k\in A$ homogeneous elements.  Consider now the graded ring $\mathcal B=\bigwedge V\otimes \mathcal H=\oplus_q\mathcal B_q$ and its special elements
\begin{equation}\label{elementi}
p_i=\pi(d (1\otimes\psi_i))\in \mathcal B^W,
\end{equation}
$d$ being the De Rham differential on $\mathcal W$  (cf. \eqref{delta}) and $\pi: \mathcal W\to \mathcal B$ the quotient map. A classical  theorem of Solomon states that $\mathcal B^W=\bigwedge(p_1,\ldots,p_r)$ (cf. Proposition \ref{Solo}).
Let $\mathcal D=\hom_W(V,\mathcal B)$. Fix a $W$-invariant non degenerate bilinear $(-,-)$ form on $V$   (unique up to multiplication by a non zero constant). There is a natural $\mathcal W$-valued bilinear form $E$ on $\mathcal W \otimes V$ defined by 
\begin{equation}\label{formaE}E(w_1\otimes v_1, w_2\otimes v_2)= (v_1,v_2) w_1w_2,\end{equation}
for $v_1,v_2\in V,\,w_1,w_2\in \mathcal W$. Since $J$ is an ideal, the form pushes down to $\mathcal B \otimes V\cong \hom(V,\mathcal B)$, where we identify $V$ with $V^*$ using the bilinear form $(-,-)$. Passing to the invariants, we obtain 
 a $\mathcal B^W$-valued bilinear form, still denoted by $E$,  on the $\mathcal B^W$-module
$\mathcal D=\hom_W(V,\mathcal B)$.

Our main result is the following theorem,  a more precise version of which  is given in Theorem \ref{main} (see also Proposition \ref{prop}). \begin{theorem}\label{main0}
{\bf (1).}
  $\mathcal D$ is a free module, with explicit generators $f_i,u_i, i=1,\ldots,r,$ over the exterior algebra $\bigwedge(p_1,\ldots,p_{r-1}).$\par\noindent
{\bf (2).} There are non zero constants  $k_i\in \mathbb Q$ such that $$E(f_i,u_{r-i+1})=k_ip_r$$ for each $i=1,\ldots,r$.  The multiplication by $p_r$ is self adjoint for the form $E$. It is  given by the formulas
\begin{align}
p_rf_i= -\sum_{j=1,\ j\neq i}^{r} k_j^{-1}E(f_i,u_{r-j+1})f_j,\qquad  i=1,\ldots,r,\\
p_ru_i= -\sum_{j=1,\ j\neq i}^{r} k_j^{-1} E(f_i,u_{r-j+1})u_j ,\qquad i=1,\ldots,r,
\end{align} 
\end{theorem}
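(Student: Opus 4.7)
The plan is to produce the generators $f_i,u_i$ explicitly, verify freeness via a dimension count supported by the form $E$, and then deduce the multiplication formulas. Under the natural identification of $(V\otimes\mathcal{H})^W$ with the wedge-degree $0$ component of $\mathcal{D}$ (swap factors and use $(-,-)$ to identify $V\cong V^*$), a Chevalley--Shephard--Todd argument yields that this space is $r$-dimensional with basis corresponding to $p_1,\ldots,p_r$; I would take $f_i$ to be this image. The identity $E(f_i,u_{r-i+1})=k_ip_r$ (wedge-degree $1$ in $\mathcal{B}$) then forces each $u_i$ to have wedge-degree $1$, i.e.\ to lie in $(V\otimes V\otimes\mathcal{H})^W$; a natural candidate arises from a second-order operation on $\psi_i$ (a Dunkl-type second derivative or a Hessian-based invariant), chosen so that $\mathcal{D}$ decomposes as a direct sum of two $\bigwedge(p_1,\ldots,p_r)$-stable submodules, one spanned by the $f$'s and one by the $u$'s.

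Using $\mathcal{H}\cong\mathbb{C}W$ as $W$-modules together with Frobenius reciprocity, one gets $\dim\mathcal{D}=r\cdot 2^r=2r\cdot\dim\bigwedge(p_1,\ldots,p_{r-1})$, so the candidate free module has the right total dimension; freeness reduces to linear independence. Given a relation $\sum_i a_if_i+\sum_i b_iu_i=0$ with coefficients in $\bigwedge(p_1,\ldots,p_{r-1})$, apply $E(-,u_l)$ for varying $l$: the anti-diagonal identities $E(f_i,u_{r-i+1})=k_ip_r$ make the leading-$p_r$ part of the pairing matrix triangular with nonzero diagonal, and a triangular elimination in $\bigwedge(p_1,\ldots,p_{r-1})$ forces all coefficients to vanish.

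The central technical computation is $E(f_i,u_{r-i+1})=k_ip_r$. Inserting the explicit representatives into \eqref{formaE} and expanding the multiplication in $\mathcal{B}$, this pairing reduces to an identity inside $\mathcal{B}^W$ of wedge-degree $1$, which ultimately rests on the fact that a Jacobian-type combination of the $\psi_j$'s is proportional to the generator of the sign-isotypic component of $\mathcal{H}$; this produces the $p_r$-factor, with nonzero rational constants $k_i$ depending on the degrees $d_i$. Once this and the off-antidiagonal values $E(f_i,u_{r-j+1})$ for $j\neq i$ are known, expand $p_rf_i\in\mathcal{D}$ in the basis $\{f_j,u_j\}$: the $u_j$-coefficients vanish by the $p_r$-stability of the $f$-summand, and applying $E(-,u_{r-k+1})$ to both sides solves for the $f_j$-coefficients, yielding the stated formula. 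The identical derivation (with the same matrix of coefficients) gives the formula for $p_ru_i$, reflecting the built-in symmetry between the two summands coming from the choice of $u_i$. Self-adjointness of $p_r$-multiplication with respect to $E$ is a formal consequence of the graded commutativity of $\mathcal{B}$: the Koszul signs produced when $p_r$ (of odd wedge-degree) is moved between the arguments of $E$ cancel under the convention used for graded self-adjointness.

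The main obstacle is the identification of the $u_i$ so that both (i) $\mathcal{D}$ splits as a $p_r$-stable direct sum $\mathcal{D}^f\oplus\mathcal{D}^u$ in which the $f$-part and $u$-part carry the same matrix for $p_r$, and (ii) the anti-diagonal pairings $E(f_i,u_{r-i+1})$ are pure $p_r$-multiples with no contribution from $p_1,\ldots,p_{r-1}$. This purity is precisely what closes up the multiplication formulas within the $\bigwedge(p_1,\ldots,p_{r-1})$-span of the $f_j$'s (respectively $u_j$'s), and is where the technical heart of the theorem resides.
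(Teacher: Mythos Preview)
Your outline captures the correct architecture—define $f_i$ in wedge-degree $0$, $u_i$ in wedge-degree $1$, use the anti-diagonal pairings $E(f_i,u_{r-i+1})=k_ip_r$ to drive both freeness and the $p_r$-multiplication law—but the argument has genuine gaps exactly where the paper does real work.

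First, you never construct the $u_i$. Saying they should be ``chosen so that $\mathcal D$ decomposes into two $p_r$-stable summands with matching multiplication matrices'' is a wish list, not a definition; the theorem is essentially the assertion that such $u_i$ exist. In the paper they are produced concretely as $u_i=\tfrac{r}{2|T|}D f_i$, where $D=\sum_{s\in T}\nabla_s$ is a Dunkl-type differential on $\mathcal B$. This specific choice is what makes everything close up, because it yields the relation $\delta(u_i)=f_i$ for the Koszul differential $\delta$, together with the orthogonalities $E(f_i,f_j)=0$ and $E(u_i,u_j)=0$.

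Second, your freeness argument via $E(-,u_l)$ does not go through as stated: from $\sum a_if_i+\sum b_iu_i=0$ you get $\sum a_iE(f_i,u_l)+\sum b_iE(u_i,u_l)=0$, and you have said nothing about the $E(u_i,u_l)$ block. The paper avoids this entirely by applying $\delta$ to the relation: since $\delta f_i=0$ and $\delta u_i=f_i$, one obtains $\sum b_if_i=0$ and is reduced to independence of the $f_i$ alone, which is then shown by pairing with a single $u_{r-j+1}$.

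Third, for the multiplication law you assert that $p_rf_i$ has no $u_j$-component ``by $p_r$-stability of the $f$-summand'', but that stability is part of what must be proved. In the paper one starts from $p_ru_i=\sum H_ju_j+\sum K_jf_j$, applies $\delta$ to get $p_rf_i=\sum H_jf_j$ (so stability of the $f$-span is a consequence, not an assumption), solves for $H_j$ by pairing with $u_{r-j+1}$, and then uses $E(u_h,u_k)=0$ to kill the $K_j$. Both the $\delta$-link and the vanishing of $E(u,u)$ are indispensable here, and neither appears in your proposal.
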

Statement (1) has been proven, for well--generated complex reflection groups, in \cite{RS}. 
\par 
As a consequence  of Theorem \ref{main0}Ê  we give a positive answer to a special case of a conjecture of Reeder, in an
``enhanced''  formulation due to Reiner and Shepler: see Section \ref{dueprimo} for details. \par
{\bf Acknowledgement.} The authors wish to thank the referee whose suggestion greatly improved the presentation of the paper. 
\section{Preliminaries, Motivations and outline of proof of Theorem \ref{main0}}\label{dueprimo}
The framework of Reeder's conjecture is Lie-theoretic,  so let us revert to this context and fix notation.\par
Let $\mathfrak g$ be a finite-dimensional simple Lie algebra (over $\mathbb C$) of rank $r$. Fix a Cartan subalgebra $\h$ in $\g$. Let $\Delta$ be the corresponding root system, $W$ the Weyl group, $\Delta^+$ a positive system and $\rho$ the Weyl vector. Observe that as a $W-$module, $\h$ is the reflection representation. We will identify $\g$ and $\g^*$ via the Killing  form which restricts to a $W-$invariant bilinear form on $\h$ which we choose as our form $(-,-)$.  Let $Q, P$ denote the root and weight lattices, $P^+$ the cone of dominant integral weights, 

The exterior algebra $\bigwedge \mathfrak g$ has been extensively studied as representation of $\mathfrak g$ (see e.g. \cite{Kold}, \cite{K}). 
We are concerned with Reeder's paper  \cite{R}, where the author studies the isotopic components  in $\bigwedge\g$ of representations whose highest weight is ``near''  $2\rho$ or ``near''  $0$ w.r.t. the usual partial order on dominant weights. The nearness condition about $0$ is made precise in the following 
\begin{definition} A irreducible finite dimensional highest weight module $V_\l$  with highest weight $\l\in Q\cap P^+$ is said to be {\it small} if twice a root of $\g$ is not a weight of $V_\l$.
\end{definition}

Given $\l\in Q\cap P^+$, the zero-weight space $0\ne V_\l^0\subset V_\l$ is a $W$-module. Introduce the following 
generating functions:
$$P(V_\l,\bigwedge \g, u)=\sum_{n\geq 0}\dim \hom_\g(V_\l,\bigwedge^n \g)u^n,\quad
P_W(V^0_\l,\mathcal B,u)=\sum_{q\geq 0}\dim \hom_W(V^0_\l,\mathcal B_q)u^q.
$$
In \cite[Conjecture 7.1]{R} Reeder proposed the following relation between these generating series when $V_\l$ is small:
\begin{equation}\label{RCC}
P(V_\l,\bigwedge \g, u)=P_W(V^0_\l, \mathcal B, u),
\end{equation}
and verified it in rank less or equal then $3$.  
 The conjecture has two different motivations. Let $G$ be a compact Lie group with complexified Lie algebra $\g$ and let $T\subset G$ be a maximal torus. Consider the $W$-action on both factors of the manifold $T\times G/T$. The Weyl map $T\times_W G/T\to G$ induces  an isomorphism in cohomology, which in terms of invariants reads as 
an isomorphism of graded vector spaces 
$$(\bigwedge\g)^\g\cong H^*(G)\cong H^*(T\times G/T)^W\cong (\bigwedge \h^*\otimes\mathcal H )^W=\mathcal B^W.$$ Conjecture \eqref{RCC} is the natural extension of this graded isomorphism to covariants of small representations.
On the other hand, Broer \cite{Broer} has shown that,  exactly for small representations, Chevalley restriction can be generalized to covariants. Let  $S(\g)$ (resp. $S(\h )$) denote   the symmetric algebra of $\g$ (resp. of $\h$). Chevalley restriction theorem gives an isomorphism $S(\g)^\g\simeq S(\h)^W$. Broer proves that restriction also induces an   isomorphism of graded $S(\g)^\g\simeq S(\h)^W-$modules between $\hom_\g(V_\l,S(\g))$ and $\hom_W(V^0_\l,S(\h))$.\par
Curiously enough, conjecture \eqref{RCC} in type $A$ was implicitly proven in literature before \cite{R}  appeared: the left hand side has been computed by Stembridge \cite{S}, whereas the right hand side appears in \cite{KP}, \cite{M} (in a more general context). Further related work appears in \cite{SS}, where Stembridge provides methods which  can be reasonably applied for a case by case proof of the conjecture (see the discussion at the end of Section 3 in \cite{RS}).

Set $\Gamma=(\bigwedge \g)^\g\cong \mathcal B^W$. In Corollary \ref{CC} we prove that Theorem \ref{main0} implies the following 
\begin{theorem}\label{RC}  There is a degree preserving isomorphism of $\Gamma$-modules
\begin{equation}\label{iso} ( \bigwedge\h\otimes \mathcal H\otimes\h)^W\to (\bigwedge \g\otimes \g)^\g.\end{equation}\end{theorem}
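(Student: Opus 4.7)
The plan is to identify both sides of the desired isomorphism with objects whose structure is already controlled, and then to leverage Theorem \ref{main0} to match them. On the left, the Killing form identifies $\h$ with $\h^{*}=V^{*}$, yielding the canonical isomorphism $(\bigwedge\h\otimes\mathcal H\otimes\h)^W\cong\hom_W(V,\mathcal B)=\mathcal D$. Theorem \ref{main0} then gives a complete presentation of $\mathcal D$: free of rank $2r$ over $\bigwedge(p_1,\dots,p_{r-1})$, with explicit generators $f_i,u_i$, bilinear form $E$, and multiplication by $p_r$ given by the adjoint formulas in part (2). On the right, Chevalley's exterior theorem gives $(\bigwedge\g)^\g\cong\bigwedge(P_1,\dots,P_r)$ with primitive invariants $P_i$ of degree $2d_i-1$, matching the degrees of $p_1,\dots,p_r$ under the Solomon isomorphism $\mathcal B^W\cong\Gamma$, and $(\bigwedge\g\otimes\g)^\g\cong\hom_\g(\g,\bigwedge\g)$ is a graded $\Gamma$-module.

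Next I would exhibit a parallel presentation on the $\g$-side: produce $2r$ homogeneous generators $F_i,U_i\in\hom_\g(\g,\bigwedge\g)$ in the same degrees as $f_i,u_i$, show that they freely generate $\hom_\g(\g,\bigwedge\g)$ over $\bigwedge(P_1,\dots,P_{r-1})$, and endow $(\bigwedge\g\otimes\g)^\g$ with a Killing-induced bilinear form whose values realize the same adjoint description of multiplication by $P_r$. Natural candidates for $F_i,U_i$ come from the primitive invariants themselves: for each $P_i$, contraction and transgression produce canonical $\g$-equivariant maps $\g\to\bigwedge\g$ of the expected degrees. A Poincar\'e series check, consistent with Reeder's conjecture \eqref{RCC} for the adjoint, ensures the count matches.

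With both presentations in hand, I would define $\Phi:\mathcal D\to\hom_\g(\g,\bigwedge\g)$ on generators by $f_i\mapsto F_i$, $u_i\mapsto U_i$ and extend $\bigwedge(p_1,\dots,p_{r-1})$-linearly under the identification $p_i\leftrightarrow P_i$. By construction $\Phi$ is graded and a $\bigwedge(p_1,\dots,p_{r-1})$-module map; upgrading to full $\Gamma$-linearity reduces to compatibility with multiplication by the top invariant, which amounts to showing that the Killing-induced pairing on the target agrees with $E$ under $\Phi$. This is precisely the content of part (2) of Theorem \ref{main0} transported to the $\g$-side, and bijectivity of $\Phi$ then follows automatically from the matching of generators and degrees.

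The main obstacle is the construction on the $\g$-side: producing $F_i,U_i$ explicitly enough to compute the Killing pairing, and verifying self-adjointness of $P_r$-multiplication with the same matrix of coefficients as in Theorem \ref{main0}(2). If a direct construction proves elusive, a fallback strategy is to use the equality of generating series for the adjoint (the dimension-level content of Reeder's conjecture) together with part (1) of Theorem \ref{main0} to obtain an abstract isomorphism of $\bigwedge(p_1,\dots,p_{r-1})$-modules, and then invoke part (2) to enforce $\Gamma$-linearity by fine-tuning the chosen generators on the $\g$-side.
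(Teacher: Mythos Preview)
Your outline is essentially the paper's approach: both sides are presented as free modules of rank $2r$ over the exterior algebra on the first $r-1$ primitive invariants, with the action of the top invariant encoded via a bilinear form, and the isomorphism is defined by matching generators. The paper's proof is exactly Corollary~\ref{CC}: send $p_i\mapsto P_i$, $f_i\mapsto f_i^\wedge$, $u_i\mapsto u_i^\wedge$.

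Two points where your write-up diverges from what actually happens. First, the $\g$-side presentation you propose to construct is not new work to be done here: it is precisely the content of \cite{DPP}, recalled verbatim in Section~\ref{dueprimo} as facts {\bf(1)}--{\bf(3)} (the generators $f_i^\wedge,u_i^\wedge$, the form $e$, and formulas \eqref{p1}--\eqref{p2}). So the ``main obstacle'' you identify is already fully resolved by citation. What remains, and what the paper actually proves in this context, is the equality of the structure constants: $k_{i,j}=c_{i,j}$ (equation~\eqref{k=c} in Proposition~\ref{prop}). This single equality is what makes the generator-matching map $\Gamma$-linear, since the formulas \eqref{perpr}--\eqref{perpr2} and \eqref{p1}--\eqref{p2} then coincide. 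You correctly isolate ``showing that the Killing-induced pairing on the target agrees with $E$ under $\Phi$'' as the crux, but you should recognize that this reduces to $k_{i,j}=c_{i,j}$ and is proved in the paper by invoking \cite[2.7.2]{DPP}.

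Second, your fallback strategy is circular. The ``equality of generating series for the adjoint'' is Reeder's conjecture~\eqref{RCC} for $V_\lambda=\g$, and that is a \emph{consequence} of Theorem~\ref{RC}, not an independent input you may feed back in. If you drop the fallback and instead invoke \cite{DPP} together with $k_{i,j}=c_{i,j}$, your argument is complete and identical to the paper's.
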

We are also able to build up a module isomorphism like \eqref{iso} for the little adjoint representation, i.e., the highest weight module $\g_s$ with highest weight the highest short root of $\D$ (provided two different root lengths exist): see  Corollary \ref{C}. Indeed, in Section \ref{la}, we prove an analogue of Theorem \ref{main0} for  the ``Weyl group side'' of the little adjoint representation: see Theorem \ref{main45}.\par
The  statement of Theorem \ref{RC}Ê cannot be extended from  the adjoint representation to a general  small representation: the small module $S^3(\C^3)$ for $\g=sl(3,\C)$ admits as zero-weight  space
the sign representation $sign$ of the symmetric group $S_3$, but an easy analysis shows that a graded isomorphism of $\Gamma$-modules $$\hom_{S_3}(sign, \bigwedge\h\otimes \mathcal H)\cong 
\hom_{sl(3,\C)}(S^3(\C^3),\bigwedge sl(3,\C))$$ cannot exist. Nevertheless in Section \ref{FR} Êwe provide a speculative approach to a possible extension of Reeder's conjecture. We  build up, for any $\g$-module $V$,  a map $\Phi^V$ from covariants of type $V$  in $\bigwedge \g$ to 
covariants of type $V^0$ in $\bigwedge\h\otimes \mathcal H$ (see \eqref{Phiii}). We conjecture that  $\Phi^V$ is injective for any $V$. A result of Reeder   would then imply that $\Phi^V$ is an {\it isomorphism  of graded vector spaces} when $V$ is small, hence implying Reeder's conjecture.\par

Our approach to Theorem \ref{main0} is motivated by our previous work with Procesi on covariants of the adjoint representation in $\bigwedge \g$ \cite{DPP}. 
It is a classical fact that 
the invariant algebra  $\Gamma$  is  an exterior algebra  $\bigwedge(P_1,\ldots,P_r)$ over primitive generators $P_i$ of degree  $2d_i-1$. The main subject  of \cite{DPP} is the study of the module of covariants $\mathcal A=\hom_\g(\g,\bigwedge\g)$; we prove the following three facts (assume for simplicity of exposition that all exponents $1=m_1\leq\ldots\leq m_r $ of $\g$ are distinct).\par
{\bf (1).} $\mathcal A$ is a free module over $\bigwedge(P_1,\ldots,P_{r-1})$ of rank $2r$. A set of free generators is given by    the $\g$-equivariant maps 
\begin{equation*}
\label{gliuf} f^\wedge_i(x)=\frac{1}{\deg(P_i)} \iota (x)P_i,\quad u^\wedge_i(x)=\frac{2}{\deg(P_i)} \iota(\mathbf{d}(x))P_i,\quad i=1,\ldots,r
\end{equation*}
where $x\in\g$, $\iota$ denotes interior multiplication in the exterior algebra and $\mathbf{d}$ is the usual Chevalley-Eilenberg coboundary operator for Lie algebra cohomology.
\par
{\bf (2).}
The  Killing form on $\g$ induces an invariant  graded symmetric bilinear $\bigwedge  \mathfrak g $-valued form on $\bigwedge  \mathfrak g\otimes\mathfrak g $
given, for $a,b\in\bigwedge \g,\,x,y\in \g$, by
\begin{equation*}\label{formae}e(a\otimes x,b\otimes y)=(x,y)a\wedge b,\end{equation*}
which restricts to a $\Gamma$-valued form on  $\mathcal A$. Then, for each pair $i,j$ there exists a  non-zero rational constant  $c_{i,j}$ such that
\begin{align}\label{ps}
&e(f^\wedge_i,f^\wedge_j)=e(u^\wedge_i,u^\wedge_j)=0.\\
&\label{cost} e(f^\wedge_i,u^\wedge_j)=e(f^\wedge_j,u^\wedge_i)=\begin{cases}
c_{i,j}P_k\quad&\text{if}\quad m_i+m_j-1=m_k\quad \text{is an exponent,}\\
0\quad &\text{otherwise.}
\end{cases}
\end{align}
\par
{\bf (3).}\label{lecostanti} Set $c_i:=c_{i,r-i+1}$. The $\Gamma$-module structure of $\mathcal A$ is expressed by the following relations 
\begin{align}\label{p1}
P_r f_i^\wedge&= -\sum_{j=1,\ j\neq i}^{r} c_j^{-1}e(f^\wedge_i,u^\wedge_{r-j+1})f^\wedge_j,\qquad  i=1,\ldots,r,\\\label{p2}
P_r  u_i^\wedge&= -\sum_{j=1,\ j\neq i}^{r} c_j^{-1} e(f^\wedge_i,u^\wedge_{r-j+1})u_j ,\qquad i=1,\ldots,r.
\end{align} 
Similar results are obtained in \cite{DPP2} Êfor covariants of the little adjoint representation.\par
In  Section \ref{due} we define, in the context of finite reflection groups, equivariant maps $u_i, f_i\in \hom_W(\h,\mathcal H\otimes \bigwedge\h), i=1,\ldots,r$ of suitable degrees for which statements {\bf (1), (2), (3)} hold upon replacing 
$P_i,e,  u^\wedge_i, f^\wedge_i$ with $p_i, E, u_i, f_i$, respectively. 
\par 
The definition \eqref{f} of the $f_i$ is natural after definition \eqref{elementi}. 
The key technical point is getting  the analog of relations \eqref{ps}, \eqref{cost}. For that purpose  it is necessary to introduce  carefully chosen elements $u_i$, whose definition  \eqref{u} involves a variation  of Dunkl's operators. We are then able to prove Proposition \ref{prop} in the adjoint setting and Proposition \ref{prop23} in the little adjoint setting, which are  the ``symmetric'' analogs of statement  (2).

\section{Symmetric picture}\label{due}
As in the Introduction, let $W$ be a  finite irreducible real reflection group, and let $S$ be a set of Coxeter generators. Let $V$ be the  euclidean space  affording a reflection representation of $W$ (which we assume to be irreducible) and 
$(\cdot , \cdot)$ the  positive definite  $W$-invariant symmetric bilinear on $V$. We will identify $V$ and $V^*$ when convenient via the invariant bilinear form. Let $T\subset W$ be the set of reflections. It is well-known  that $T$ is the union of at most two conjugacy classes $T_\ell$ and $T_p$. 
Let choose for every $s\in T$ a non zero vector $\alpha_s$ orthogonal to the reflection hyperplane $Fix(s)$ (so that $s(\alpha_s)=-\alpha_s)$, and let $\Delta^+$ be the set of such vectors; 
then $\Delta^+\cup -\Delta^+$ is a root system in the sense of \cite[1.2]{Hum}.\par
 Consider the ring $A$ of complex valued polynomial functions on $V$ (which is also $S(V)$, under the identification $V\cong V^*$). One knows that $A^W$ is a polynomial ring on dim$V=r$ homogeneous generators $\psi_1,\ldots ,\psi_r$ of degrees $2\leq d_1\leq d_2\leq\cdots\leq d_r$. Now consider the ideal $J$ of $A$ generated by 
$\psi_1,\ldots ,\psi_r$  and set $\mathcal H=A/J$. This is a graded representation of $W$ whose ungraded character is the regular character.  It is a well known fact that $A\simeq A^W\otimes \mathcal H$ as a $A^W$-module.  
\par
Let  $\mathcal W=\bigwedge V\otimes A$ be the Weil algebra, which we regard as graded by  
$\deg(q\otimes k)=\deg(q)+2\deg(k)$ for $q\in \bigwedge V$ and $k\in A$ homogeneous elements.  Using the duality between $V$ and $V^*$ we think of $\mathcal W$  as the algebra of differential forms on $V$ with polynomial coefficients.

So, $\mathcal W$ is equipped with the usual de Rham differential
 $d$ given by  
\begin{equation}\label{delta}d(q\otimes k)=\sum_{i=1}^r(x_i\wedge q)\otimes \frac{\partial k}{\partial x_i},\end{equation}
where $\{x_1,\ldots x_r\}$ is an orthonormal basis of $V$. 
Under our grading, $d$ has clearly  degree $-1$.

Consider now the graded ring $$\mathcal B=\bigwedge V\otimes \mathcal H=\mathcal W /\mathcal W J.$$ We denote by $\pi:\mathcal W \to \mathcal B$ the quotient homomorphism; sometimes, abusing notation, we also denote by $\pi$ the quotient map $A\to \mathcal H$. It is clear that $\mathcal B$ inherits a grading from $\mathcal W$.

Together with $d$ we also have the Koszul differential $\delta$   given by the derivation  
\begin{equation}\label{ruledelta}
\delta(x_i\otimes 1)=1\otimes x_i,\qquad\delta(1\otimes f)=0, f\in A,
\end{equation}
which has degree $1$ under our grading.

Since $\delta$ is $W$-equivariant and  the ideal $\mathcal W J$ is preserved by $\delta$,   $\delta$ induces a differential on $\mathcal B$. 
On the other hand, $\mathcal W J$ is clearly not preserved by $d$ and we need to introduce a further differential on $\mathcal W$.



 For $s\in T$ consider the operator
$$\nabla_{s}=(d\log\a)(1-s)=(\a\otimes 1) \frac {1-s}{\a}, $$ 
where $\a=\a_s\in \Dp$. Remark that $\nabla_{s}$ does not depend on the choice of $\a$ and acts on the Weil algebra $\mathcal W$.
The following properties of $\nabla_s$ are clear from its definition.
\begin{lemma}\label{l21}\begin{enumerate}\item If $\omega\in \mathcal W^W$, $\nabla_{s}(\omega)=0.$
\item $\nabla_{s}(\omega\nu)=\nabla_{s}(\omega)\nu+(s\omega)\nabla_{s}(\nu),$ $\omega,\nu\in \mathcal W$.
\end{enumerate}\end{lemma}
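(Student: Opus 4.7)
The plan is to unwind the definition $\nabla_s = (d\log\alpha)(1-s) = (\alpha\otimes 1)(1\otimes\alpha)^{-1}(1-s)$; both parts then reduce to direct computation. The preliminary delicacy is that $\nabla_s$ should send $\mathcal W$ to $\mathcal W$, not merely to its localization at $\alpha$. I would verify this using two ingredients: the standard fact that $f - s(f)$ vanishes on the reflection hyperplane $\mathrm{Fix}(s)$ for any $f\in A$, hence is divisible by $\alpha$ in $A$; and the decomposition $\bigwedge V = \bigwedge(\alpha^\perp) \oplus \alpha\wedge\bigwedge(\alpha^\perp)$ into $\pm 1$-eigenspaces of $s$. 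Any potentially bad $\alpha^{-1}$-pole in $(1-s)\omega/(1\otimes\alpha)$ lies in the $-1$-eigenpart (where $\alpha$ already appears as an exterior factor) and is therefore annihilated by the outer left multiplication by $\alpha\otimes 1$ via $\alpha\wedge\alpha=0$.

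Granted this well-definedness, part (1) is immediate: $\omega \in \mathcal W^W$ gives $s\omega = \omega$, hence $(1-s)\omega = 0$, hence $\nabla_s(\omega) = 0$.

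For part (2), the essential input is the purely algebraic identity
\begin{equation*}
(1-s)(\omega\nu) = \bigl((1-s)\omega\bigr)\nu + s(\omega)\bigl((1-s)\nu\bigr),
\end{equation*}
obtained by inserting $\pm s(\omega)\nu$ inside $\omega\nu - s(\omega)s(\nu)$. Multiplying on the left by $d\log\alpha$ and using that $1\otimes\alpha$ is central in $\mathcal W$ (so that the factor $(1\otimes\alpha)^{-1}$ may be moved freely past $s(\omega)$) converts the two summands into $\nabla_s(\omega)\nu$ and $(s\omega)\nabla_s(\nu)$ respectively, yielding the claim. The only real obstacle in this proof is the well-definedness discussed in the first paragraph; once that is in hand, the Leibniz-type identity itself is essentially formal.
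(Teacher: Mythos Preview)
Your treatment of part~(1) and of the well-definedness issue is fine, and indeed the paper itself offers no argument here beyond ``clear from its definition''; the subsequent lemma in the paper (that $\alpha_s\wedge(a-s(a))=0$ for $a\in\bigwedge V$) together with formula~\eqref{psi1} amounts to the same well-definedness check you sketch.

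There is, however, a genuine gap in your argument for part~(2). You correctly write
\[
(1-s)(\omega\nu)=\bigl((1-s)\omega\bigr)\nu+s(\omega)\bigl((1-s)\nu\bigr),
\]
and you observe that the factor $(1\otimes\alpha)^{-1}$ is central. But $d\log\alpha=(\alpha\otimes1)(1\otimes\alpha)^{-1}$ has \emph{two} pieces, and you must also move the exterior factor $\alpha\otimes1$ past $s(\omega)$. Since $\alpha\otimes1$ has exterior degree~$1$, one has
\[
(\alpha\otimes1)\cdot s(\omega)=(-1)^{|\omega|}\,s(\omega)\cdot(\alpha\otimes1),
\]
where $|\omega|$ is the exterior degree of $\omega$. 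Hence the second summand becomes $(-1)^{|\omega|}(s\omega)\nabla_s(\nu)$, not $(s\omega)\nabla_s(\nu)$. A quick check confirms this: with $V=\langle x,y\rangle$, $s(x)=-x$, $s(y)=y$, $\alpha=x$, $\omega=y\otimes1$, $\nu=1\otimes x$, one computes $\nabla_s(\omega\nu)=2(x\wedge y)\otimes1$ but $\nabla_s(\omega)\nu+(s\omega)\nabla_s(\nu)=2(y\wedge x)\otimes1$.

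So the twisted Leibniz rule actually reads
\[
\nabla_s(\omega\nu)=\nabla_s(\omega)\nu+(-1)^{|\omega|}(s\omega)\nabla_s(\nu),
\]
and the paper's formulation (hence also your attempted proof of it) is missing this Koszul sign. For the paper's immediate application---showing that $\mathcal W J$ is $\nabla_s$-stable---the sign is harmless, since one takes $\nu=1\otimes\psi_i\in\mathcal W^W$ so that $\nabla_s(\nu)=0$ and only the first summand survives.
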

Lemma \ref{l21} implies  that the ideal $\mathcal W J$ is preserved by $\nabla_{s}$, so we get an operator on the algebra $\mathcal B$. 

 We  now remark that if $\omega=a\otimes b$, $a\in\bigwedge V,\ b\in A$,
$a\otimes b-s(a\otimes b)=(a-s(a))\otimes b+s(a)\otimes (b-s(b))$ and we have
\begin{lemma} If $a\in \bigwedge V$, $\alpha_s\wedge (a-s(a))=0$.\end{lemma}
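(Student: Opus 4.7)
The plan is to exploit the geometry of the reflection $s$: it fixes the hyperplane $H:=Fix(s)$ pointwise and sends $\alpha_s$ to $-\alpha_s$, where $\alpha_s$ spans the orthogonal complement of $H$. This gives a direct sum decomposition $V=\C\alpha_s\oplus H$, which in turn yields
\[
\bigwedge V\;=\;\bigwedge H\,\oplus\,\alpha_s\wedge\!\bigwedge H.
\]

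First I would write an arbitrary $a\in\bigwedge V$ uniquely in the form $a=c+\alpha_s\wedge b$ with $b,c\in\bigwedge H$. Since $s$ acts as the identity on $H$, it acts as the identity on $\bigwedge H$, so $s(c)=c$ and $s(b)=b$; together with $s(\alpha_s)=-\alpha_s$ and the fact that $s$ is an algebra automorphism of $\bigwedge V$, this gives
\[
s(a)\;=\;c-\alpha_s\wedge b,\qquad a-s(a)\;=\;2\,\alpha_s\wedge b.
\]
Hence $\alpha_s\wedge(a-s(a))=2\,\alpha_s\wedge\alpha_s\wedge b=0$, which is the claim.

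I expect no real obstacle: the only thing to check is that $s$ does act trivially on $\bigwedge H$ (immediate, since it is an algebra automorphism restricting to the identity on $H$) and that the decomposition $\bigwedge V=\bigwedge H\oplus\alpha_s\wedge\bigwedge H$ is valid (it follows from $V=\C\alpha_s\oplus H$ by the standard isomorphism $\bigwedge(U_1\oplus U_2)\cong\bigwedge U_1\otimes\bigwedge U_2$). If one preferred a more computational route, an induction on the exterior degree of $a$ using the twisted Leibniz rule
\[
(1-s)(x\wedge y)\;=\;(1-s)(x)\wedge y+s(x)\wedge(1-s)(y)
\]
would also work, the base case being that $(1-s)(v)\in\C\alpha_s$ for every $v\in V$; but the direct decomposition argument above is shorter and more transparent.
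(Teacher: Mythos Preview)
Your proof is correct. The paper takes the inductive route you mention at the end: it checks the base case $x\in V$ (where $x-s(x)$ is a scalar multiple of $\alpha_s$) and then writes $a=a'\wedge x'$ and applies the twisted Leibniz identity to $(1-s)(a'\wedge x')$ together with the inductive hypothesis. Your argument via the decomposition $\bigwedge V=\bigwedge H\oplus \alpha_s\wedge\bigwedge H$ is shorter and more conceptual, since it identifies $a-s(a)$ explicitly as $2\,\alpha_s\wedge b$ in one step rather than peeling off factors. One minor point: the paper sets $V$ up as a real euclidean space, so strictly speaking the line through $\alpha_s$ is $\mathbb{R}\alpha_s$ rather than $\mathbb{C}\alpha_s$; this does not affect the argument.
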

\begin{proof} If $x\in V$, $x-s(x)$ is a multiple of $\alpha_s$ and we are done. Let $a=a'\wedge x'$ with $a'$ of degree $t$ and $x\in V$. Then, by induction,  $\alpha_s\wedge (a-s(a))=\alpha_s\wedge ((a'-s(a')\wedge x'+s(a')\wedge (x'-s(x'))=\alpha_s\wedge(a'-s(a'))\wedge x'+(-1)^ts(a')\wedge \alpha_s\wedge (x'-s(x'))=0$.\end{proof}

It follows that 
\begin{equation}\label{psi1}\nabla_s(a\otimes b)=(\alpha_s\wedge s(a))\otimes \frac{ (1-s)(b)}{\alpha_s}.\end{equation}

We now choose a function $c:T\to \mathbb C$ constant on conjugacy classes and set 
\begin{equation}\label{partial}D_c:=\sum_{s\in T}c(s)\nabla_s,\end{equation}
and consider it as an operator both on $\mathcal W$ and on $\mathcal  B$.
Notice that, since clearly $D_c(\mathcal W^W)=0$ and any element of $\mathcal B^W$ can be lifted to a $W$-invariant element of $\mathcal W$, we get
\begin{proposition}\label{primo} If $u\in \mathcal B^W$, $D_c(u)=0$.
\end{proposition}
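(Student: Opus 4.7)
The claim follows directly from the two inputs highlighted in the text just before the statement: Lemma \ref{l21}(1), which gives $\nabla_s(\omega)=0$ for every $W$-invariant $\omega\in\mathcal{W}$, and the fact that the quotient map $\pi:\mathcal{W}\to\mathcal{B}$ sends $\mathcal{W}^W$ surjectively onto $\mathcal{B}^W$. The plan is to lift an invariant $u\in\mathcal B^W$ to a $W$-invariant element $\tilde{u}\in\mathcal{W}^W$, apply Lemma \ref{l21}(1) to kill $D_c(\tilde{u})$ already at the level of $\mathcal{W}$, and then push down via $\pi$, using that $\mathcal{W}J$ is stable under each $\nabla_s$ (so $D_c$ is well-defined on $\mathcal{B}$ and commutes with $\pi$).

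First I would justify that the averaging argument works. Given $u\in\mathcal B^W$, pick any preimage $u'\in\mathcal{W}$ with $\pi(u')=u$, which exists because $\pi$ is surjective. Since $W$ is finite, the Reynolds operator $\tilde{u}:=\frac{1}{|W|}\sum_{w\in W} w\cdot u'$ lies in $\mathcal{W}^W$; and since $\pi$ is $W$-equivariant and $u$ is $W$-invariant, one has $\pi(\tilde{u})=\frac{1}{|W|}\sum_{w}w\cdot\pi(u')=\frac{1}{|W|}\sum_w w\cdot u=u$. (Equivalently, this is a consequence of the Chevalley--Shephard--Todd decomposition $A\cong A^W\otimes\mathcal H$ as graded $W$-modules, which produces the section $\mathcal B^W\cong A^W\otimes\mathcal B^W\hookrightarrow\mathcal W^W$ sending $u$ to $1\otimes u$.)

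Next, by linearity, $D_c(\tilde{u})=\sum_{s\in T}c(s)\nabla_s(\tilde{u})=0$ since each $\nabla_s$ annihilates $\tilde{u}$ by Lemma \ref{l21}(1). Finally, because the paper has already verified (from Lemma \ref{l21} via formula \eqref{psi1}) that each $\nabla_s$ preserves the ideal $\mathcal{W}J$, the descended operator on $\mathcal{B}$ satisfies $D_c\circ\pi=\pi\circ D_c$. Therefore
\[
D_c(u)=D_c(\pi(\tilde{u}))=\pi(D_c(\tilde{u}))=\pi(0)=0,
\]
which is what we wanted.

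There is really no substantive obstacle: the only point that requires a line of argument is the lift from $\mathcal{B}^W$ to $\mathcal{W}^W$, and this is immediate by averaging over the finite group $W$. All the difficulty has been hidden in Lemma \ref{l21}, whose statement (1) is itself clear from the formula $\nabla_s=(\alpha_s\otimes 1)\tfrac{1-s}{\alpha_s}$: invariant elements are killed by $1-s$.
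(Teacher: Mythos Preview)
Your proof is correct and is essentially the same argument as the paper's, just written out in full detail: the paper simply says that $D_c(\mathcal W^W)=0$ and that every element of $\mathcal B^W$ lifts to $\mathcal W^W$, which is exactly your averaging-then-descending argument. The parenthetical description of the lift via $A\cong A^W\otimes\mathcal H$ is also fine (it gives the same section up to identifying $\mathcal W\cong A^W\otimes\mathcal B$), though the Reynolds operator already suffices.
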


\begin {lemma} If $w\in W$, 
$$w^{-1}D_c w=D_c.$$
\end{lemma}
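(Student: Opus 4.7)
The plan is to reduce the lemma to the reflection-by-reflection identity
\begin{equation*}
w^{-1}\nabla_s w=\nabla_{w^{-1}sw},\qquad w\in W,\ s\in T.
\end{equation*}
Granting this, the conclusion is immediate. Reindexing via $s'=w^{-1}sw$ (so $s=ws'w^{-1}$) and using that $c$ is a class function, $c(ws'w^{-1})=c(s')$, one obtains
$$w^{-1}D_c w=\sum_{s\in T}c(s)\,\nabla_{w^{-1}sw}=\sum_{s'\in T}c(ws'w^{-1})\,\nabla_{s'}=\sum_{s'\in T}c(s')\,\nabla_{s'}=D_c.$$

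To verify the reflection-by-reflection identity, I would apply both sides to a typical monomial $a\otimes b\in\bigwedge V\otimes A$ and use the explicit formula \eqref{psi1} for $\nabla_s$. A direct computation, exploiting the $W$-equivariance of the wedge product and of the $W$-action on $A$, yields
$$w^{-1}\nabla_s w(a\otimes b)=\bigl(w^{-1}(\alpha_s)\wedge (w^{-1}sw)(a)\bigr)\otimes\frac{(1-w^{-1}sw)(b)}{w^{-1}(\alpha_s)},$$
where I have used the elementary identity $w^{-1}(1-s)w=1-w^{-1}sw$. Now $w^{-1}(\alpha_s)$ is a nonzero vector on which $w^{-1}sw$ acts by $-1$, hence it is orthogonal to the fixed hyperplane of $w^{-1}sw$ and is therefore an admissible choice of $\alpha_{w^{-1}sw}$. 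Since $\nabla_{s'}$ was explicitly remarked to be independent of the rescaling of $\alpha_{s'}$, the right-hand side of the display is exactly $\nabla_{w^{-1}sw}(a\otimes b)$.

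The only point that warrants a sanity check is that the division by $w^{-1}(\alpha_s)$ genuinely lands in $A$: this is clear because $(1-w^{-1}sw)(b)$ vanishes on the fixed hyperplane of $w^{-1}sw$, hence is divisible in $A$ by any linear form cutting out that hyperplane. No real obstacle is present; the argument is essentially bookkeeping once \eqref{psi1} is available. The mild subtlety is just tracking the sign and scaling freedom in the choice of $\alpha_s$, which is harmless precisely because $\nabla_s$ depends intrinsically on $s$ alone.
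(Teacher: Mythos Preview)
Your proof is correct and follows the same route as the paper: reduce to the identity $w^{-1}\nabla_s w=\nabla_{w^{-1}sw}$ and then reindex using that $c$ is a class function. The paper simply asserts that identity in one line, whereas you supply the verification via formula~\eqref{psi1} and the observation that $w^{-1}(\alpha_s)$ is an admissible $\alpha_{w^{-1}sw}$; this extra detail is sound and does not deviate from the paper's approach.
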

\begin{proof} We have $$w^{-1}D_c w(\omega)=w^{-1}( \sum_{s\in T}c(s)\nabla_s(w\omega))=\sum_{{s\in T}}c(s)\nabla_{w^{-1}sw}(\omega)=D_c (\omega),$$
since the function $c$ is constant on conjugacy classes.
\end{proof}

\begin{proposition}\label{ddelta} Let $U$ be an irreducible $W$-module and $x\in \mathcal H$ or $x\in A$ be such that it generates a copy of $U$. Fix $s_\ell\in T_\ell$, $s_p\in T_p$. Then 
\begin{equation}\label{formuladelta}\delta D_c(x)=(c(s_\ell)(|T_\ell|(1-\frac{\chi_U(s_\ell)}{\chi_U(1)})+c(s_p)|T_p|(1-\frac{\chi_U(s_{p})}{\chi_U(1)}))x.\end{equation} 
\end{proposition}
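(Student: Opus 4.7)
The plan is to reduce the statement to a computation and then invoke the fact that the class sum of a conjugacy class acts by a scalar on any irreducible $W$-module. I would work with $x \in A$ (the case $x \in \mathcal H$ follows by lifting since both $\nabla_s$ and $\delta$ descend). Viewing $x$ as $1 \otimes x \in \mathcal W$, formula \eqref{psi1} immediately gives
\begin{equation*}
\nabla_s(1 \otimes x) = \alpha_s \otimes \frac{(1-s)(x)}{\alpha_s},
\end{equation*}
so that $D_c(x) = \sum_{s \in T} c(s)\, \alpha_s \otimes \frac{(1-s)(x)}{\alpha_s}$. The quotient is a genuine polynomial since $(1-s)(x)$ is divisible by $\alpha_s$.

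Next I would apply $\delta$ using the derivation rule \eqref{ruledelta}: $\delta(\alpha_s \otimes g) = (1 \otimes \alpha_s)(1 \otimes g)$ for any $g \in A$, because $\delta$ kills the polynomial factor. Hence
\begin{equation*}
\delta D_c(x) = \sum_{s \in T} c(s)\bigl(1 \otimes \alpha_s \cdot \tfrac{(1-s)(x)}{\alpha_s}\bigr) = \sum_{s \in T} c(s)\, (1-s)(x),
\end{equation*}
under the identification $1 \otimes A \cong A$. Splitting the sum according to the (at most) two conjugacy classes of reflections and using that $c$ is a class function yields
\begin{equation*}
\delta D_c(x) = c(s_\ell)\!\!\sum_{s \in T_\ell}\!(1-s)(x) + c(s_p)\!\!\sum_{s \in T_p}\!(1-s)(x).
\end{equation*}

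Finally I would invoke Schur's lemma. The class sum $Z_i := \sum_{s \in T_i} s$ is central in $\mathbb C[W]$, hence acts as a scalar on any irreducible $W$-module; computing the trace shows that scalar is $|T_i|\, \chi_U(s_i)/\chi_U(1)$. Since $x$ lies in a copy of $U$ inside $A$ (resp.\ $\mathcal H$), we obtain $\sum_{s \in T_i}(1-s)(x) = |T_i|\bigl(1 - \chi_U(s_i)/\chi_U(1)\bigr) x$, and substitution yields \eqref{formuladelta}.

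There is no really substantial obstacle; the only subtle points are keeping the $\bigwedge$-derivation signs in $\delta$ straight (they are trivial here because $\delta$ acts only on the single-factor $\alpha_s$) and confirming that $\frac{(1-s)(x)}{\alpha_s}$ lies in $A$, not just in its field of fractions. The latter is standard: $\alpha_s$ divides $(1-s)(x)$ because $(1-s)(x)$ vanishes on the reflecting hyperplane $Fix(s)$.
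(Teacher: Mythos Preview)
Your proof is correct and follows essentially the same route as the paper: both reduce $\delta D_c(x)$ to $\sum_{s\in T} c(s)(x-s(x))$ and then identify the scalar via Schur's lemma (you phrase it as class sums being central in $\mathbb C[W]$, the paper as $\delta D_c$ commuting with $W$ and taking traces, which is the same thing). Your write-up is just a bit more explicit about the intermediate computation of $\nabla_s$ and $\delta$.
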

\begin{proof} By the definitions
$$\delta D_c(x)=\sum_{s\in T} c(s)(x-s(x)),$$
so that $\delta D_c(x)\in U$. Since $U$  is irreducible and $\delta D_c$ commutes with the $W$-action, we get that $\delta D_c(x)=\gamma x$, $\gamma$ a constant. Computing traces we get
$$\gamma \chi_U(1)=(c(s_\ell)|T_\ell |+c(s_p)|T_p |)\chi_{U}(1)-c(s_\ell)|T_\ell |\chi_U(s_\ell)-c(s_p)|T_p|\chi_U(s_p),$$ from which \eqref{formuladelta} is clear.\end{proof}

Finally we see that $D_c$ gives a differential both on $\mathcal W$ and on $\mathcal B$. Indeed we have
\begin{proposition}\label{secondo'} 
$D_c^2=0$.
\end{proposition}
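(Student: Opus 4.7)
The strategy is to realise $D_c$, up to the de Rham differential $d$, as the ``total Dunkl differential'' on $\mathcal{W}$, and then extract $D_c^2=0$ from the polynomial dependence on $c$. Fix an orthonormal basis $\{x_1,\ldots,x_r\}$ of $V$ and consider the Dunkl operators
\[T_v=\partial_v+\sum_{s\in T}c(s)\,\alpha_s(v)\,\frac{1-s}{\alpha_s},\qquad v\in V,\]
acting on $A$; by Dunkl's classical commutativity theorem, $[T_v,T_w]=0$ for all $v,w\in V$ and every class function $c\colon T\to\mathbb{C}$. Let $\widetilde D\colon\mathcal{W}\to\mathcal{W}$ be given by $\widetilde D(a\otimes b)=\sum_{i=1}^{r}(x_i\wedge a)\otimes T_{x_i}(b)$.

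First I would check $\widetilde D^2=0$: writing out $\widetilde D^2(a\otimes b)=\sum_{i,j}(x_j\wedge x_i\wedge a)\otimes T_{x_j}T_{x_i}(b)$ and antisymmetrising in the pair $(i,j)$ collapses the sum thanks to $[T_{x_i},T_{x_j}]=0$.

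Next I would show $\widetilde D=d+D_c$. Plugging in the definition of $T_{x_i}$, the derivative part reassembles into $d(a\otimes b)$, while the reflection part becomes $\sum_{s\in T}c(s)\,(\alpha_s\wedge a)\otimes\frac{(1-s)b}{\alpha_s}$ after using the self-duality identity $\sum_i\alpha_s(x_i)\,x_i=\alpha_s$. By the lemma preceding \eqref{psi1}, $\alpha_s\wedge a=\alpha_s\wedge s(a)$, so this summand equals $D_c(a\otimes b)$.

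For the extraction step, observe that Dunkl commutativity is preserved under the rescaling $c\mapsto tc$, so the preceding two steps give $(d+tD_c)^2=0$ for every $t\in\mathbb{C}$. Expanding as a polynomial in $t$, the coefficient of $t^2$ is $D_c^2$ and must vanish, while the coefficient of $t$ yields the bonus identity $dD_c+D_cd=0$. Since $\mathcal{W}J$ is preserved by $D_c$, the identity descends to $\mathcal{B}$. The sole genuine obstacle is invoking Dunkl's commutativity theorem as a black box; a self-contained proof would instead have to establish the pairwise cancellation $\sum_{s,t\in T'}c(s)c(t)(\nabla_s\nabla_t+\nabla_t\nabla_s)=0$ inside each rank-two reflection subgroup $W'\subset W$ with reflection set $T'$, a route that is considerably more laborious.
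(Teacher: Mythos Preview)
Your argument is correct and genuinely different from the paper's. You recognise $D_c$ as the reflection part of a ``total Dunkl differential'' $\widetilde D=d+D_c$ on $\mathcal W$, deduce $\widetilde D^2=0$ from the commutativity of Dunkl operators on $A$, and then separate out $D_c^2=0$ by the scaling trick $c\mapsto tc$. All steps check: the identity $\sum_i\alpha_s(x_i)\,x_i=\alpha_s$ is just orthonormal expansion, and the replacement of $\alpha_s\wedge a$ by $\alpha_s\wedge s(a)$ is exactly Lemma~3.2, so the reflection part of $\widetilde D$ really is $D_c$ as given by \eqref{psi1}. The bonus relation $dD_c+D_cd=0$ drops out for free.

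The paper instead proceeds self-containedly: it expands $D_c^2=\sum_{s,t}c(s)c(t)\nabla_s\nabla_t$, observes that each pair $(s,t)$ only sees the rank-two subspace $V_{s,t}=\langle\alpha_s,\alpha_t\rangle$, and then carries out the dihedral cancellation by hand in a separate lemma, checking that both the coefficient of each reflection $s_j$ and the coefficient of each rotation $s_is_j$ in the twisted group algebra vanish. This is precisely the ``considerably more laborious'' route you allude to at the end. Your approach is shorter and more conceptual, and it makes explicit the link with Dunkl operators that the paper only mentions in a remark afterwards; the price is the dependence on Dunkl's commutativity theorem, whose standard proof is itself a rank-two reduction of essentially the same flavour as the paper's Lemma~\ref{lemma}.
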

\begin{proof}  
We have
$$D_c^2=\sum_{(s,t)\in T\times T}c(s)c(t)\nabla_s\nabla_t.$$
Now
$$\nabla_s\nabla_t(a\otimes b)=(\alpha_s\wedge s(\alpha_t)\wedge st(a))\otimes (\frac{b-t(b)}{\alpha_s\alpha_t}-\frac{s(b)-st(b)}{ \alpha_s s(\alpha_t)})$$
If $s=t$, clearly $\alpha_s\wedge s(\alpha_s)=-\alpha_s\wedge \alpha_s=0$, so we can assume $s\neq t$.

We now consider the space $V_{s,t}$ spanned by $\alpha_s$ and $\alpha_t$ and the dihedral subgroup $W_{s,t}$ generated by $s,t$. If we set $U=\a_t^\perp \cap \a_s^\perp$, we clearly get that $V=V_{s,t}\oplus U$ and we can write as a linear combination of elements of the form $a=a'\otimes u$ with $a'\in\bigwedge V_{s,t}$ and $u\in \bigwedge U$ each homogeneous.
Then if $a'$ is of positive degree we get $\alpha_s\wedge s(\alpha_s)a'=0$ so that we get possibly non zero contributions to $\nabla_s\nabla_t(a\otimes b)$ only when $a'=1$. By linearity we can assume that $a\in \bigwedge U$  so that $st(a)=a$ and we get 
$$\nabla_s\nabla_t(a\otimes b)=(a\otimes 1)(\alpha_s\wedge s(\alpha_t)\otimes (\frac{b-t(b)}{\alpha_s\alpha_t}-\frac{s(b)-st(b)}{ \alpha_s s(\alpha_t)})).$$
We can even assume that $a=1$ and look at
$$\nabla_s\nabla_t(1\otimes b)=(\alpha_s\wedge s(\alpha_t))\otimes (\frac{b-t(b)}{\alpha_s\alpha_t}-\frac{s(b)-st(b)}{ \alpha_s s(\alpha_t)}).$$
Furthermore notice that all the contributions to the right hand side come from either multiplying of dividing by vectors in $V_{s,t}$  or applying elements in $W_{s,t}$. From these considerations we deduce that we can really assume that $W=W_{s,t}$ and the claim follows from  Lemma \ref{lemma} below.\end{proof}
Consider a dihedral group $D$ generated by the reflections $s,t$ subject to the relation $(st)^n=1$, so that its set of reflections is formed by the $n$ elements $s_1=s,s_2= sts, s_3= ststs, \ldots s_n=(st)^{n-1}s=t$.

Let $\mathfrak h$ be a reflection representation of $D$ (meaning that is the direct sum  $\mathfrak h_{s,t}\oplus U$, with $\h_{s,t}$ the irreducible 2 dimensional reflection representation of $D$  as above). We choose as usual $\alpha_i=\alpha_{s_i}$, $i=1, \ldots , n$ and consider the ring $R=\bigwedge \h_{s,t}\otimes S(\mathfrak h)[\prod \alpha_i^{-1}]$ 
 and the twisted group algebra $R[D]$.
\begin{lemma}\label{lemma} The element $$\sum_{r=1}^n c(s_r)d\log\alpha_r(1-s_r)$$
has zero square.
\end{lemma}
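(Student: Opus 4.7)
The plan is to show that $\Theta:=\sum_{r=1}^n c(s_r)\,d\log\alpha_r(1-s_r)$ squares to zero in the twisted algebra $R[D]$. Write $\Theta=\omega_0-\omega_1$ with $\omega_0:=\sum_r c(s_r)\,d\log\alpha_r$ a scalar $1$-form and $\omega_1:=\sum_r c(s_r)\,d\log\alpha_r\cdot s_r$ an operator-valued $1$-form. Clearly $\omega_0^2=0$. Since $c$ is constant on conjugacy classes and conjugation by $s_r$ sends $s_q$ to $s_{2r-q}$, the form $\omega_0$ is $D$-invariant, so $s_r\cdot\omega_0=\omega_0\cdot s_r$ in $R[D]$; combined with the anti-commutativity of $1$-forms this gives $\omega_0\omega_1+\omega_1\omega_0=0$. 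Hence $\Theta^2=\omega_1^2$, and the lemma reduces to the flatness $\omega_1^2=0$ of the dihedral Knizhnik--Zamolodchikov-type operator.

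Next, I expand
$$\omega_1^2=\sum_{r,q}c(s_r)c(s_q)\,d\log\alpha_r\wedge d\log(s_r\alpha_q)\cdot s_r s_q.$$
Using $s_r\alpha_q=-\alpha_{2r-q}$ (with the cyclic convention $\alpha_{m+n}=-\alpha_m$) and $s_rs_q=(st)^{r-q}$, I group by element of $D$. Products of reflections are rotations, so no reflection appears; the identity contribution requires $r=q$, and then the wedge $d\log\alpha_r\wedge d\log(-\alpha_r)=0$ kills the term. The coefficient of each non-trivial rotation $(st)^k$, $1\le k\le n-1$, is
$$\Omega_k=\sum_{q=1}^n c(s_{q+k})c(s_q)\,d\log\alpha_{q+k}\wedge d\log\alpha_{q+2k},$$
so it suffices to prove $\Omega_k=0$ for every such $k$.

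In the $2$-dimensional space $\h_{s,t}$, after normalizing $\|\alpha_a\|=1$ one has $d\alpha_a\wedge d\alpha_b=\sin((b-a)\pi/n)\,\omega_{\mathrm{vol}}$ for the standard volume form, hence $\Omega_k=\sin(k\pi/n)\,\omega_{\mathrm{vol}}\cdot F_k$ with
$$F_k:=\sum_{m=1}^n\frac{c(s_m)c(s_{m-k})}{\alpha_m\,\alpha_{m+k}},$$
a rational function on $\h_{s,t}$ homogeneous of degree $-2$. By the Nullstellensatz on $\C^2$, $F_k$ vanishes identically as soon as it has no poles. Along each hyperplane $\alpha_p=0$ only the terms $m=p$ and $m=p-k$ are singular; the dihedral linear relation $\alpha_{p-k}+\alpha_{p+k}=2\cos(k\pi/n)\,\alpha_p$ gives $\alpha_{p-k}\equiv-\alpha_{p+k}\pmod{\alpha_p}$, and a direct computation shows that the residue of $F_k$ along $\alpha_p=0$ equals
$$\frac{c(s_{p-k})\bigl(c(s_p)-c(s_{p-2k})\bigr)}{\alpha_{p+k}}.$$
This vanishes because $s_p$ and $s_{p-2k}$ are always conjugate in $D$: for $n$ odd there is a single class of reflections, while for $n$ even the indices $p$ and $p-2k$ have the same parity. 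The main obstacle is setting up this residue computation cleanly; the exceptional case $k=n/2$ (possible only for $n$ even) needs to be handled separately, but there $\cos(k\pi/n)=0$ and $\alpha_{p-k}=-\alpha_{p+k}$ holds identically, so the two contributing terms cancel outright.
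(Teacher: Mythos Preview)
Your argument is correct, but it follows a genuinely different route from the paper's proof. The paper expands the square directly and then treats the coefficient of each group element separately: for a reflection $s_j$ it collects the contributions coming from the ordered pairs $(s_i,s_j)$ and $(s_j,s_i)$ and shows they cancel after a change of index $\alpha_h=s_j(\alpha_i)$; for a rotation $s_is_j$ it reduces to the identity $\sum_h d\log\alpha_h\wedge d\log\alpha_{h+m}=0$, which it proves by decomposing the shift permutation $h\mapsto h+m$ into cycles and using a telescoping relation $d\log\alpha_{u_1}\wedge d\log\alpha_{u_2}+d\log\alpha_{u_2}\wedge d\log\alpha_{u_3}=d\log\alpha_{u_1}\wedge d\log\alpha_{u_3}$.

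Your decomposition $\Theta=\omega_0-\omega_1$ handles the identity and reflection coefficients in one stroke via the $D$-invariance of $\omega_0$, which is slicker than the paper's reindexing. For the rotation coefficients you replace the paper's combinatorial telescoping by a residue/Hartogs argument: $F_k$ is homogeneous of degree $-2$, and the cancellation of its simple poles along each $\alpha_p=0$ comes down to $c(s_p)=c(s_{p-2k})$, which is exactly the class-function hypothesis. This is the standard mechanism behind flatness of KZ-type connections, and it makes the role of the conjugacy condition on $c$ more transparent than in the paper. The paper's approach, by contrast, is entirely elementary and stays within combinatorics of the dihedral group; it also makes visible the cycle structure of the shift, which your analytic argument hides. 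Two minor remarks: what you invoke is not really the Nullstellensatz but rather Hartogs (or normality of $\mathbb C^2$) together with homogeneity; and the separate treatment of $k=n/2$ is unnecessary, since your residue computation already covers it.
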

\begin{proof} As we have seen, each summand of $$\left(\sum_{i=1}^nc(s_r) \alpha_r\otimes \frac{1}{\alpha_r}(1-s_r)\right)^2$$ comes from a pair of reflections $(s_i, s_j)$ and is of the form 

$$q_iq_j(\alpha_i\wedge s_i(\alpha_j))\otimes \left(\frac{1-s_j}{\alpha_i\alpha_j}-\frac{s_i(1-s_j)}{ \alpha_is_i(\alpha_j)}\right),$$
where we set $c(s_h)=q_h$ for each $h$.
So the pairs $(s_i,s_j)$ give to $s_j$ the contribution
\begin{equation}\label{prima}q_j\sum_{i=1}^n q_i(\alpha_i\wedge s_i(\alpha_j))\otimes \frac{1}{\alpha_i\alpha_j}.\end{equation}
On the other hand the pairs $(s_j,s_i)$ give to $s_j$ the contribution
\begin{equation}\label{seconda}q_j\sum_{i=1}^n q_i(\alpha_j\wedge s_j(\alpha_i))\otimes \frac{1}{\alpha_js_j(\alpha_i)}.\end{equation}
We have already seen that we can assume that $j\neq i$. Then setting $\alpha_h=s_j(\alpha_i)$, and observing that $q_h=q_i$, we get that \eqref{seconda} becomes 
$$q_j\sum_{h=1}^n q_h(\alpha_j\wedge \alpha_h)\otimes \frac{1}{\alpha_j\alpha_h}.$$
On the other hand, $\alpha_i\wedge s_i(\alpha_j)=\alpha_i\wedge \alpha_j$, so that \eqref{prima} equals $$q_j\sum_{i=1}^n q_i(\alpha_i\wedge \alpha_j)\otimes \frac{1}{\alpha_i\alpha_j}.$$
Thus the coefficient of $s_j$ is clearly $0$.

We now pass to the coefficient of $s_is_j$.  
This is equal to 
$$q_iq_j(\alpha_i\wedge s_i(\alpha_j))\otimes \frac{1}{ \alpha_is_i(\alpha_j)}=q_iq_j(\alpha_i\wedge \alpha_j)\otimes \frac{1}{ \alpha_is_i(\alpha_j)}$$
For each $h=1,\ldots n$, $s_is_j=s_hs_{h+j-i}$ and $q_iq_j=q_hq_{h+j-i}$.
So one needs to verify
$$\sum_{h=1}^nd\log\alpha_h\wedge d\log s_h(\alpha_{h+j-i})=\sum_{h=1}^nd\log\alpha_h\wedge d\log\alpha_{h+i-j}=0.$$
If we take a cycle $c=(u_1,u_2,\ldots ,u_d)$, we claim  that, setting  $u_{d+1}=u_1$, 
$$\sum_{r=1}^{d}d\log\alpha_{u_i}\wedge d\log(\alpha_{u_{i+1}})=0.$$
If the cycle has length $2$ this is obvious. If  it has length $3$, a simple computation shows that $$d\log\alpha_{u_1}\wedge d\log\alpha_{u_2}+d\log\alpha_{u_2}\wedge d\log\alpha_{u_3}
=d\log\alpha_{u_1}\wedge d\log\alpha_{u_3}$$
which is our relation.

Proceed now  by induction and,  using above relation,
substitute and get the relation using the cycle $(u_1,u_3,\ldots u_d)$.
Let us fix $m=j-i$ and consider the permutation $\sigma(h)=m+h, mod(n)$ (choosing as remainders $1,\ldots ,n$).
 Decompose it into cycles and apply the previous claim to get the result.\end{proof}

 \begin{remark} We are going to call $D_c$ a Dunkl differential. Operators of this kind on differential forms already appear in the paper \cite{Dun}.\end{remark}

\section{The bilinear form}

If $W$ is crystallographic, hence it is the Weyl group associated to a simple Lie algebra $\mathfrak g$, we recall that, by Chevalley theorem, restriction gives an isomorphism between $S(\mathfrak g)^{\mathfrak g}$  and $A^W$, the polynomial ring  of $W$ invariant functions on the Cartan subalgebra. We then fix 
 homogenous generators $\psi_1,\ldots, \psi_r$ of the polynomial ring $\mathbb C[\mathfrak g]^{\mathfrak g}\simeq A^W$ in such a way that they induce by transgression the generators $P_1,\ldots P_r$  of $(\bigwedge \mathfrak g)^{\mathfrak g}$ considered in the Introduction. On the other hand considering   $\psi_1,\ldots, \psi_r$ in $A^W$, we can introduce the elements 
$p_i$ (cf. \eqref{elementi}).

In the case $W$ is not crystallographic, we choose the homogenous generators $\psi_1,\ldots, \psi_r$ of the polynomial ring $A^W$ arbitrarily and proceed to define the elements 
$p_i$, $i=1,\ldots ,r$ as above.

\begin{remark} A priori the definition of the elements $p_i$ depends on the choice of the  generators $\psi_1,\ldots, \psi_r$ of the polynomial ring $A^W$. However, if  $J\subset A^W$ denotes as above  the ideal of elements of positive degree it is immediate to see that  the $p_i$ depend only on the induced basis  $\overline \psi_1,\ldots  \overline \psi_r$ of $J/J^2$.
 
 Indeed if $z=\psi_i-\psi'_i\in J^2$ and $z=\sum_jx_jy_j, x_i,y_j\in J$, then 
$$\pi(d (1\otimes z))=\pi(\sum_jd(x_{j})y_{j}+\sum_jx_{j}d(y_{j}))=0,$$
 proving the claim.
\end{remark}

We have the following theorem of Solomon,  which is reproved here for the reader's convenience.
\begin{proposition}\label{Solo} $\mathcal B^W$ is the graded exterior algebra generated by the elements
$p_i$ defined in \eqref{elementi}.
\end{proposition}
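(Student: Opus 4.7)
The plan is to reduce Proposition \ref{Solo} to two classical ingredients, imported as black boxes: (i) the refined Chevalley theorem giving a $W$-equivariant $A^W$-module isomorphism $A \cong A^W \otimes_{\mathbb C}\mathcal H$, with $W$ acting trivially on the $A^W$ factor; and (ii) Solomon's theorem on invariant polynomial forms, which asserts that
$$\mathcal W^W \;=\; A^W \otimes_{\mathbb C} \bigwedge\bigl(d(1\otimes\psi_1),\ldots,d(1\otimes\psi_r)\bigr),$$
the right-hand side being a free exterior algebra over $A^W$.

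Tensoring (i) with $\bigwedge V$ over $\mathbb C$ gives a $W$-equivariant $A^W$-module isomorphism $\mathcal W \cong A^W \otimes_{\mathbb C} \mathcal B$. Writing $A^W_+$ for the positive-degree elements of $A^W$, we have $J = A\cdot A^W_+$, so $\mathcal W J$ corresponds to $A^W_+ \otimes \mathcal B$ under the identification above, yielding a canonical $W$-module isomorphism
$$\mathcal B \;\cong\; \mathcal W \otimes_{A^W}\mathbb C.$$
The Reynolds projector $R=\tfrac{1}{|W|}\sum_{w\in W}w$ is $A^W$-linear (because $A^W$ is pointwise $W$-fixed), so taking $W$-invariants commutes with the base change $-\otimes_{A^W}\mathbb C$. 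Hence
$$\mathcal B^W \;=\; \mathcal W^W \otimes_{A^W}\mathbb C,$$
and substituting (ii) collapses the right-hand side to the exterior $\mathbb C$-algebra on the images of the $d(1\otimes\psi_i)$ in $\mathcal B$, which are precisely the elements $p_i$ of \eqref{elementi}. Since $\pi$ preserves degree and $d(1\otimes\psi_i)$ is homogeneous of degree $2d_i-1$ in the paper's grading, the identification $\mathcal B^W \cong \bigwedge(p_1,\ldots,p_r)$ is graded as claimed.

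The substantive obstacle is input (ii). Its classical proof uses that the Jacobian $\det(\partial \psi_i/\partial x_j)$ is a non-zero scalar multiple of $\prod_{s\in T}\alpha_s$, which forces $d\psi_1,\ldots,d\psi_r$ to be $A^W$-linearly independent in $V\otimes A$; the spanning claim — that every $W$-invariant form is an $A^W$-combination of the exterior monomials in the $d\psi_i$ — follows either from a Molien/Hilbert-series count matching $\prod_i(1+u^{2d_i-1})/\prod_i(1-u^{2d_i})$, or by induction on exterior degree starting from Chevalley. Neither step relies on the Dunkl-type differential $D_c$ developed later in the paper, so once (ii) is granted the proposition reduces to the bookkeeping in base change above.
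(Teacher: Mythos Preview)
Your proof is correct, but it takes a different route from the paper's. You import the classical Solomon theorem on $\mathcal W^W$ as a black box and then pass to $\mathcal B^W$ by base change along $A^W\to\mathbb C$, using that the Reynolds projector is $A^W$-linear so that $(-)^W$ commutes with $-\otimes_{A^W}\mathbb C$. The paper instead gives a short self-contained argument directly for $\mathcal B^W$: it observes that $\dim\mathcal B^W=\dim\bigwedge V=2^r$ (since $\mathcal H$ carries the regular character), notes that the $p_i$ anticommute, and then shows the top product $p_1\cdots p_r=x_1\wedge\cdots\wedge x_r\otimes\pi(\Delta)$ is nonzero because the Jacobian $\Delta=\det(\partial\psi_j/\partial x_i)$ spans the lowest-degree sign representation and hence lies outside $J$; this forces all $2^r$ exterior monomials in the $p_i$ to be linearly independent, hence a basis. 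Your approach is more structural and makes the dependence on the classical result transparent, at the cost of invoking the full statement for $\mathcal W^W$; the paper's argument is more elementary and stays entirely within $\mathcal B$, though the Jacobian fact you mention in your last paragraph is exactly the engine behind both.
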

\begin{proof} First notice that  dim $\mathcal B^W$=dim $\bigwedge V=2^r$. Secondly, notice that for each $i=1,\ldots ,r$ the element $p_i$  is of degree $(1,2d_i-2)$, that is of total degree $2d_i-1$. It is clear that $p_ip_j=-p_jp_i$, so it suffices to show that $\prod_{j=1}^rp_j\neq 0$.
Now let us remark that the element
$$\Delta=\det (\frac{\partial_i\psi_j}{\partial x_i})$$
spans the copy of the sign representation of $W$ of lowest possible degree. Thus $\Delta\notin J$. Furthermore 
$$\prod_{j=1}^rp_j=x_1\wedge x_2\cdots \wedge x_r\otimes \pi(\Delta)\neq 0$$
and the claim follows.
\end{proof}
\begin{remark} In the crystallographic case, we get a natural isomorphism between $(\bigwedge \mathfrak g)^{\mathfrak g}$ and $\mathcal B^W$.
\end{remark}
We now consider $\mathcal D=\hom_W(V,\mathcal B),$
and the following special element of $\mathcal D$
\begin{align}\label{f}
f_i(v)&=\pi(1\otimes \partial_v\psi_i),
\end{align}
where $\partial_v$ denotes the directional derivative in the direction $v\in V$ and $i=1,\ldots r$.

Notice that  with respect to a orthonormal basis $\{x_i\}$ of $V$, we have  $$f_i=\sum_{j=1}^r \pi(1\otimes \frac{\partial\psi_i}{\partial x_j})\otimes x_j.$$ 
Moreover, by \eqref{ruledelta}, for every $v\in V$,
\begin{equation}\label{ffff}
 \delta(f_i(v))=0.
\end{equation}

Fix a function $c:T\to \mathbb C$ constant on conjugacy classes as in the previous section.
Set  $|T|_c=c(s_\ell)|T|_\ell+c(s_p)|T|_p$ and define
\begin{equation}\label{u}u_i(v)=\frac {r}{2|T|_c }D_c f_i(v).\end{equation}
\begin{proposition}\label{secondo}  For every $v\in V$,
 $ \delta(u_i(v))=f_i(v)$.
\end{proposition}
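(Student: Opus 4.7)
My plan is to reduce the statement to a direct application of Proposition \ref{ddelta} with $U$ taken to be the reflection representation $V$ itself. Since both $\delta$ and $D_c$ descend from $\mathcal{W}$ to $\mathcal{B}$ through the quotient map $\pi$, I may lift the calculation: writing $f_i(v) = \pi(1 \otimes \partial_v\psi_i)$, we have
$$\delta\bigl(u_i(v)\bigr) = \frac{r}{2|T|_c}\,\pi\bigl(\delta D_c (1 \otimes \partial_v\psi_i)\bigr).$$
The normalization $r/(2|T|_c)$ inserted in definition \eqref{u} is meant to cancel exactly the Schur scalar that $\delta D_c$ produces on a copy of the reflection representation.

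To verify the hypotheses of Proposition \ref{ddelta}, I observe that the map $V \to A$, $v' \mapsto \partial_{v'}\psi_i$, is $W$-equivariant because $\psi_i$ is $W$-invariant, and it is nonzero because $\psi_i$ is a homogeneous polynomial of positive degree. Since $V$ is irreducible, the image of this map is a copy of $V$ inside $A$. Consequently, for every nonzero $v$, the element $x = \partial_v\psi_i$ together with its $W$-translates spans a copy of $V$ in $A$, which is exactly the setting in which Proposition \ref{ddelta} applies (and for $v = 0$ the identity to be proved reduces to $0 = 0$).

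Next, I compute the scalar in \eqref{formuladelta} with $U = V$. Any reflection $s$ acts on $V$ with eigenvalues $-1, 1, \ldots, 1$, so $\chi_V(s) = r - 2$ irrespective of its conjugacy class; hence $1 - \chi_V(s)/\chi_V(1) = 2/r$. Substituting into \eqref{formuladelta} gives
$$\delta D_c\bigl(\partial_v\psi_i\bigr) = \frac{2}{r}\bigl(c(s_\ell)|T_\ell| + c(s_p)|T_p|\bigr)\,\partial_v\psi_i = \frac{2|T|_c}{r}\,\partial_v\psi_i.$$
Pushing down via $\pi$ and multiplying by the normalization in \eqref{u} then yields $\delta(u_i(v)) = f_i(v)$.

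There is no serious obstacle here: the whole proof is essentially bookkeeping once one recognizes that the $f_i(v)$ live in a copy of the reflection representation of $W$, so that Proposition \ref{ddelta} applies and the constant in the definition of $u_i$ was engineered precisely to make the identity hold on the nose.
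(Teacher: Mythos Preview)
Your proof is correct and follows exactly the same approach as the paper: apply Proposition \ref{ddelta} with $U=V$, use that $\chi_V(s)=r-2$ for every reflection $s$, and observe that the resulting scalar $2|T|_c/r$ is cancelled by the normalization in the definition of $u_i$. You have simply filled in the details that the paper leaves implicit.
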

\begin{proof} Apply Proposition \ref{ddelta} taking  $U=V$ (notice that we have assumed that $V$ is irreducible) and $x=f_i$.
Since $\chi_U(s_p)=\chi_U(s_\ell)=r-2$, the claim follows.
\end{proof}

From now on, we will  use the constant function $c=1$ on $T$ and set $D=D_1$.
Recall the  natural $\mathcal W$-valued bilinear form $E$ on $\mathcal W \otimes V$ defined by \eqref{formaE}
and its restriction to  a $\mathcal B^W$-valued bilinear form on the $\mathcal B^W$-module
$\mathcal D$.

\begin{proposition}\label{prop}\
\begin{enumerate}
\item $E(f_i,f_j)=0$.
\item Assume that $d_i\ne d_j$ for $i\ne  j$. Then 
\begin{equation}\label{eqm}
E(u_i,f_j)=E(u_j,f_i)=\begin{cases}Êk_{i,j}p_s \quad&\text{if there exists $s$ such that  $d_i+d_j-2=d_s$,}\\0\quad&\text{otherwise.}\end{cases}
\end{equation}
with $k_{i,j}\neq 0$.
Furthermore if $W$ is crystallographic, then 
\begin{equation}\label{k=c}k_{i,j}=c_{i,j},\end{equation} where  $c_{i,j}$ is the constant introduced in \eqref{cost}.
\end{enumerate}
\end{proposition}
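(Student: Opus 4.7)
My plan is to control both statements by bidegree in the Solomon decomposition $\mathcal B^W=\bigwedge(p_1,\ldots,p_r)$ (Proposition \ref{Solo}) together with an explicit computation in $\mathcal W$ via the natural lifts $\tilde f_i(v)=1\otimes\partial_v\psi_i$ of $f_i$ and $\tilde u_i(v)=\frac{r}{2|T|}D\tilde f_i(v)$ of $u_i$, reading off the answer after projection to $\mathcal B^W$.

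For part (1), in an orthonormal basis $\{x_k\}$ of $V$,
$$E(f_i,f_j)=\pi\Big(1\otimes\sum_k\partial_{x_k}\psi_i\cdot\partial_{x_k}\psi_j\Big).$$
The polynomial inside $\pi$ is $W$-invariant and of positive degree $d_i+d_j-2$, hence lies in $J\cap A^W$ and maps to $0$ in $\mathcal H$. Equivalently, by Solomon every generator $p_l$ of $\mathcal B^W$ carries exterior degree one, so $\mathcal B^W$ has no element of exterior degree zero and positive polynomial degree.

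For part (2), the same bookkeeping places $E(u_i,f_j)$ in the bidegree $(1,d_i+d_j-3)$ part of $\mathcal B^W$, which under the distinct-degrees hypothesis is at most one-dimensional, spanned by the unique $p_s$ with $d_s=d_i+d_j-2$ if such an $s$ exists. This already produces the dichotomy in \eqref{eqm}. For the scalar $k_{i,j}$ and the symmetry I unpack $\tilde u_i$: using $\nabla_s(1\otimes h)=\alpha_s\otimes(h-sh)/\alpha_s$ together with the $W$-invariance identity $s(\partial_{x_k}\psi_i)=\partial_{x_k}\psi_i-\frac{2(\alpha_s,x_k)}{(\alpha_s,\alpha_s)}\partial_{\alpha_s}\psi_i$, one obtains
$$\sum_k(1-s)(\partial_{x_k}\psi_i)\cdot\partial_{x_k}\psi_j=\frac{2\,\partial_{\alpha_s}\psi_i\cdot\partial_{\alpha_s}\psi_j}{(\alpha_s,\alpha_s)},$$
and consequently
$$\tilde E(\tilde u_i,\tilde f_j)=\frac{r}{|T|}\sum_{s\in T}\alpha_s\otimes\frac{\partial_{\alpha_s}\psi_i\,\partial_{\alpha_s}\psi_j}{\alpha_s\,(\alpha_s,\alpha_s)}\in\mathcal W.$$
Each summand is polynomial because $\partial_{\alpha_s}\psi_i$ is divisible by $\alpha_s$ (the $\psi_i$ being $s$-invariant), and the right-hand side is manifestly symmetric in $i,j$; projecting to $\mathcal B^W$ gives $E(u_i,f_j)=E(u_j,f_i)$.

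The main obstacle will be proving $k_{i,j}\ne 0$ and, simultaneously, identifying $k_{i,j}=c_{i,j}$ in the crystallographic case. For this I plan to invoke the isomorphism $\mathcal B^W\cong(\bigwedge\mathfrak g)^{\mathfrak g}$ from the remark following Proposition \ref{Solo}: choosing the $\psi_i$ as the restrictions of primitive generators of $S(\mathfrak g)^{\mathfrak g}$ aligns $p_s$ with $P_s$, and a matching of definitions shows that $f_i,u_i,E$ correspond to $f_i^\wedge,u_i^\wedge,e$ of \cite{DPP} (with the Dunkl-type sum $\frac{r}{2|T|}D$ playing the role of the Chevalley--Eilenberg $\iota\circ\mathbf d$). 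Then \eqref{cost} directly yields $k_{i,j}=c_{i,j}\ne 0$ and \eqref{k=c}. For non-crystallographic $W$ the nonvanishing must be handled separately, either by inspection in the finite list $H_3,H_4,I_2(m)$ using the explicit expression for $\tilde E(\tilde u_i,\tilde f_j)$, or by exhibiting a concrete nonzero pairing of its image with a dual of $p_s$ inside $\mathcal B^W$.
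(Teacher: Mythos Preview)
Your treatment of part~(1) and of the degree dichotomy in part~(2) is correct and coincides with the paper's proof. Your explicit formula
\[
\tilde E(\tilde u_i,\tilde f_j)=\frac{r}{|T|}\sum_{s\in T}\alpha_s\otimes\frac{\partial_{\alpha_s}\psi_i\,\partial_{\alpha_s}\psi_j}{\alpha_s\,(\alpha_s,\alpha_s)}
\]
is a clean way to see the symmetry $E(u_i,f_j)=E(u_j,f_i)$, which the paper leaves implicit.

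The gap is in your argument for $k_{i,j}\neq 0$ and for \eqref{k=c}. You propose to ``match definitions'' so that $f_i,u_i,E$ correspond to $f_i^\wedge,u_i^\wedge,e$ under $\mathcal B^W\cong(\bigwedge\mathfrak g)^{\mathfrak g}$. But that ring isomorphism does not come with any comparison map between the modules $\mathcal D=\hom_W(V,\mathcal B)$ and $\mathcal A=\hom_{\mathfrak g}(\mathfrak g,\bigwedge\mathfrak g)$; the Dunkl operator $D$ on $\mathcal B$ and the Chevalley--Eilenberg construction $\iota\circ\mathbf d$ on $\bigwedge\mathfrak g$ live on different spaces, and no a~priori identification of $u_i$ with $u_i^\wedge$ is available. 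In fact, producing such a module isomorphism is precisely the content of Corollary~\ref{CC}, which is deduced \emph{from} this proposition; invoking it here is circular.

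The paper sidesteps this by passing to the symmetric side. Since $\delta u_i=f_i$ (Proposition~\ref{secondo}) and $\delta f_j=0$, one lifts $E(u_i,f_j)=k_{i,j}p_s$ to $\mathcal W$ and applies $\delta$ to obtain, in $A^W$,
\[
\sum_k \partial_{x_k}\psi_i\,\partial_{x_k}\psi_j \;=\; k_{i,j}\,\psi_s \;+\; b,\qquad b\in J^2.
\]
Thus $k_{i,j}$ is read off as the coefficient of $\psi_s$ in an explicit $W$-invariant polynomial modulo $J^2$. This is now a statement entirely inside $A^W\cong S(\mathfrak g)^{\mathfrak g}$ (Chevalley restriction), where the comparison with the constant $c_{i,j}$ of \cite{DPP} is legitimate: \cite[2.7.2]{DPP} gives both $k_{i,j}\neq 0$ and $k_{i,j}=c_{i,j}$ in the crystallographic case. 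For non-crystallographic $W$ the paper observes that the only degree coincidences $d_i+d_j-2=d_s$ occur for $d_i=2$, $d_j=2$, or complementary indices (so $s=r$), and the complementary case is handled by the argument of \cite[Proposition~2.9]{DPP}, again on the symmetric side. Your proposed ``inspection'' for $H_3,H_4,I_2(m)$ could in principle work, but you should carry it out via this $\delta$-reduction rather than through an unestablished exterior-side correspondence.
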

\begin{proof} Let us choose a orthonormal basis $\{x_i\}$ for $V$. Then,
since $E(f_i,f_j)\in\mathcal B^W$ and 
$$E(f_i,f_j)=\sum_{s=1}^r\pi(1\otimes\frac{\partial \psi_i}{\partial x_s}\frac{\partial \psi_j}{\partial x_s}),$$
we have that  $ \sum_{s=1}^r(\partial \psi_i/\partial x_s)(\partial \psi_j/\partial x_s)\in J$, hence (1) follows.
 
 To see part (2), notice that 
  $E(u_i,f_j)\in (\bigwedge^1V\otimes \mathcal H)^W$ so that  if there is no $s$ for which $d_i+d_j-2=d_s$,  then by Proposition \ref{Solo} we have $E(u_i,f_j)=0$.
   
  Assume $d_i+d_j-2=d_s$. Then we have that necessarily $E(u_i,f_j)=k_{i,j} p_k,\,k_{i,j}\in\C$ again by Proposition \ref{Solo}.  
  
 We have to prove that $k_{i,j}\neq 0$. Lifting to  
$\mathcal W$ and applying $\delta$ we obtain 
$E(d\psi_i,d\psi_j)=k_{i,j} \psi_k+ b,\,b\in J^2$. If $k=r=2$ this statement is obvious. If $k=r$, so that the indices $i,j$ are complementary, we can then apply the argument of  Proposition 2.9 from \cite{DPP}  and deduce $k_{i,j}\ne 0$.

This completes the proof in the non crystallographic case, since  the only pairs $d_i,d_j$ with $d_i+d_j-2=d_s$  either have $d_i=2$ or $d_j=2$ or $i,j$ are complementary: this is clear for dihedral groups;  for $H_3$ the degrees are $2,6,10$ and for $H_4$ they are $2,12,20,30$ so everything is readily verified.

It remains to treat the crystallographic case. But this follows from \cite[2.7.2]{DPP}, from which also the equality $k_{i,j}=c_{i,j}$ is easily deduced. 
\end{proof}
\begin{remark} Type $D_{2n}$, where a basic degree of multiplicity $2$ appears, is handled as in \cite[Proposition 1.3]{DPP}.
\end{remark}
\begin{proposition}\label{PP2}  
\begin{equation}\label{mainn}E(u_j,u_i)
=0.\end{equation}
\end{proposition}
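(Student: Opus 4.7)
The strategy is to show that $E(u_i,u_j)$ vanishes (the claim with the indices reversed then follows by graded symmetry of $E$), by reducing it, via a ``Leibniz rule'' for $D$ with respect to $E$, to the equality $DE(f_i,u_j)=0$, which holds because $E(f_i,u_j)\in\mathcal B^W$ and $D$ annihilates $\mathcal B^W$.

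First I would establish the following identity: for $a,b\in\mathcal D$ with $a$ of exterior degree zero,
\[
E(Da,b)=DE(a,b)+E(a,Db).
\]
Fix an orthonormal basis $\{x_k\}$ of $V$ and write $E(a,b)=\sum_k a(x_k)b(x_k)\in\mathcal B$. Applying $D=\sum_s\nabla_s$ termwise and invoking the twisted Leibniz rule of Lemma~\ref{l21}(2), which for $|a(x_k)|=0$ reads $\nabla_s(a(x_k)b(x_k))=\nabla_s(a(x_k))b(x_k)+s(a(x_k))\nabla_s(b(x_k))$, produces $E(Da,b)$ plus the residual sum $\sum_{s,k}s(a(x_k))\nabla_s(b(x_k))$. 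The residual is handled in three moves: the $W$-equivariance of $a$ and $b$ gives $s(a(x_k))=a(s(x_k))$ and $b(s(x_k))=s(b(x_k))$; the substitution $y_k=s(x_k)$ permutes the orthonormal basis; and the elementary identity $\nabla_s\circ s=-\nabla_s$, immediate from $(1-s)\circ s=-(1-s)$, supplies the overall minus sign. The residual therefore equals $-E(a,Db)$, yielding the asserted formula.

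With the Leibniz formula in hand the conclusion is immediate. Since $u_j=\tfrac{r}{2|T|}Df_j$ and $D^2=0$ by Proposition~\ref{secondo'}, we have $Du_j=0$. Apply the formula with $a=f_i$ (whose exterior degree is zero) and $b=u_j$:
\[
E(Df_i,u_j)=DE(f_i,u_j)+E(f_i,Du_j).
\]
The second summand is zero because $Du_j=0$; the first is zero because $E(f_i,u_j)\in\mathcal B^W$ by the $W$-equivariance of $E$, and $D$ kills $\mathcal B^W$ by Proposition~\ref{primo}. Hence $E(Df_i,u_j)=0$, and therefore $E(u_i,u_j)=\tfrac{r}{2|T|}E(Df_i,u_j)=0$.

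The main technical obstacle is the sign bookkeeping in the Leibniz formula, specifically the interaction of the twisted Leibniz rule with the change of variables $y_k=s(x_k)$ and with the anticommutation $\nabla_s\circ s=-\nabla_s$. Restricting to $a$ of exterior degree zero, which is precisely the case needed ($a=f_i$), removes the residual exterior-degree sign in Lemma~\ref{l21}(2) and keeps the computation clean.
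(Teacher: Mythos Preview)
Your argument is correct and follows the paper's strategy: establish a Leibniz-type identity linking $DE(a,b)$, $E(Da,b)$ and $E(a,Db)$, then invoke $D(\mathcal B^W)=0$ (Proposition~\ref{primo}) and $D^2=0$ (Proposition~\ref{secondo'}). The difference lies only in how that identity is obtained. The paper works with general $u,v\in\mathcal B$, shows that the defect $D(uv)-(Du)v+u\,Dv=\sum_{s}(u+s(u))\nabla_s(v)$ is $s$-anti-invariant term by term and hence has no $W$-invariant part, and then passes to the $\mathcal B^W$-valued form $E$ by citing \cite{DPP}. You instead exploit the $W$-equivariance of $a$ and $b$ together with the orthonormal-basis change $x_k\mapsto sx_k$ and the relation $\nabla_s\circ s=-\nabla_s$ to obtain the identity \emph{exactly} on $E$, bypassing both the orthogonality argument and the external reference. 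One minor quibble: $\{sx_k\}$ is not literally a permutation of $\{x_k\}$ but merely another orthonormal basis; the reindexing is nonetheless valid since the sum $\sum_k a(y_k)\nabla_s(b(sy_k))$ is the trace of a bilinear map and hence independent of the orthonormal basis used.
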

\begin{proof} Consider $u,v\in \mathcal B$.
We have
$$D(uv)=(Du)v+\sum_{s\in S}s(u)\nabla_s(v)=(Du)v-uDv+\sum_{s\in S}(u+s(u))\nabla_s(v).$$
But $(u+s(u))\nabla_s(v)=(-1)^{deg (u)}(1-s)(d\log\alpha_s(u+s(u))v)$
Since $d\log\alpha_s(u+s(u))$ is  fixed by $s$, we have
$$s((u+s(u))\nabla_s(v))=-(u+s(u))\nabla_s(v).$$ Thus, since the usual scalar product is $W$-invariant, we deduce that  $s((u+s(u))\nabla_s(v))$ is orthogonal to the $W$-invariants. So, also 
$$ (uv)-(Du)v+uDv$$ 
 is orthogonal to the $W$-invariants.
From this, reasoning as in \cite[Lemma 2.15]{DPP}, we get that
$$D\, E(f_j,u_i)- E((1\otimes D)f_j, u_i)+E(f_j,(1\otimes D)u_i)=0$$
However  by Lemma \ref{primo}, $D\, E(f_j,u_i)=0$, by Proposition \ref{secondo'} $E(f_j,(1\otimes D) u_i)=0$, so that \eqref{mainn} follows.
\end{proof}
\section{Main Theorem}
\begin{theorem}\label{main}
{\bf (1).}
  $\mathcal D$ is a free module, with basis the elements $f_i,u_i, i=1,\ldots,r,$ over the exterior algebra $\bigwedge(p_1,\ldots,p_{r-1}).$\par\noindent
{\bf (2).} Let $k_i=k_{i,r-i+1}$ with  $k_{i,j}$     defined as in \eqref{eqm}.
Then for each $i=1,\ldots,r,$ $$E(f_i,u_{r-i+1})=k_ip_r.$$   The multiplication by $p_r$ is self adjoint for the form $E$ and it is  given by the formulas
\begin{align}\label{perpr}
p_rf_i= -\sum_{j=1,\ j\neq i}^{r} k_j^{-1}E(f_i,u_{r-j+1})f_j,\qquad  i=1,\ldots,r,\\ \label{perpr2}
p_ru_i= -\sum_{j=1,\ j\neq i}^{r} k_j^{-1} E(f_i,u_{r-j+1})u_j ,\qquad i=1,\ldots,r,
\end{align} 
\end{theorem}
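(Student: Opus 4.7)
The plan is to first settle part (1) by establishing linear independence of $\{f_i, u_i\}_{i=1}^r$ over $\Lambda := \bigwedge(p_1,\ldots,p_{r-1})$ via the form $E$, then to match Hilbert series to obtain generation; part (2) will then follow by expanding $p_r f_i, p_r u_i$ in this basis and computing the coefficients again via $E$. The central structural fact throughout is the exponent duality $d_i + d_{r-i+1} = d_r + 2$: together with Proposition \ref{prop}(2) it gives $E(f_i, u_{r-i+1}) = k_{i, r-i+1}\, p_r$, whence setting $k_i := k_{i, r-i+1} \neq 0$ establishes the first assertion of part (2); moreover, for $j \neq r - i + 1$ the pairing $E(f_i, u_j)$ either vanishes or equals a multiple of some $p_s$ with $s < r$, hence lies in $\Lambda$. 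Combined with $E(f_i, f_j) = 0$ (Proposition \ref{prop}(1)) and $E(u_i, u_j) = 0$ (Proposition \ref{PP2}), this is the ``anti-diagonal'' Gram-matrix structure driving what follows.

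For linear independence, suppose $\sum_i(a_i f_i + b_i u_i) = 0$ with $a_i, b_i \in \Lambda$. Pairing with $f_k$ via $E$ collapses to $\sum_i b_i\, E(u_i, f_k) = 0$. Choosing $k = r - s + 1$ and splitting $\bigwedge(p_1,\ldots,p_r) = \Lambda \oplus p_r\Lambda$, the only contribution to the $p_r\Lambda$-summand comes from $i = s$, yielding $b_s k_s p_r = 0$, and hence $b_s = 0$ by injectivity of multiplication by $p_r$ on $\Lambda$; the symmetric argument with $u_k$ gives $a_s = 0$. For generation, the ungraded dimension count $\dim_{\mathbb C} \mathcal D = \dim V \cdot \dim \bigwedge V = r\cdot 2^r = 2r \cdot 2^{r-1}$ is immediate (using that $\mathcal H$ is the regular representation), matching the rank of the candidate free module. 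To upgrade to graded equality, one computes the Molien-type generating function $H(\mathcal D; t) = \tfrac{1}{|W|} \sum_{w} \chi_V(w) \det(1 + tw) \prod_j(1 - t^{2d_j})/\det(1 - t^2 w)$ and verifies it equals $\prod_{j=1}^{r-1}(1 + t^{2d_j - 1}) \sum_{i=1}^r (t^{2d_i - 2} + t^{2d_i - 3})$, which combined with the injection from linear independence forces equality.

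For part (2): by (1) we may uniquely write $p_r f_i = \sum_j \alpha_{ij} f_j + \sum_j \beta_{ij} u_j$ with $\alpha_{ij}, \beta_{ij} \in \Lambda$. The identity $E(bx, y) = b\,E(x, y)$ for $b \in \mathcal B^W$, combined with graded commutativity of $\mathcal B^W$, implies that multiplication by $p_r$ is self-adjoint for $E$. Now $E(p_r f_i, f_k) = p_r\, E(f_i, f_k) = 0$, so $\sum_j \beta_{ij}\, E(u_j, f_k) = 0$ and the anti-diagonal argument forces $\beta_{ij} = 0$. Next, $E(p_r f_i, u_{r-j+1}) = p_r E(f_i, u_{r-j+1})$ equals $p_r \cdot k_i p_r = 0$ when $j = i$ and $p_r \cdot \xi_j$ with $\xi_j \in \Lambda$ when $j \neq i$; comparing this to $\sum_\ell \alpha_{i\ell}\, E(f_\ell, u_{r-j+1}) = \alpha_{ij} k_j p_r + \sum_{\ell \ne j}\alpha_{i\ell} E(f_\ell, u_{r-j+1})$ and isolating the $p_r \Lambda$-summand (noting $\alpha_{ij} p_r = -p_r \alpha_{ij}$ since $\alpha_{ij}$ is odd) yields $\alpha_{ii} = 0$ and $\alpha_{ij} = -k_j^{-1} E(f_i, u_{r-j+1})$ for $j \neq i$, giving the stated formula for $p_r f_i$. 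The corresponding formula for $p_r u_i$ follows by the same strategy, pairing first with $u_k$ and then with $f_k$ and using $E(u_i, u_j) = 0$.

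The step I anticipate as the main obstacle is the graded Hilbert series computation underlying generation in part (1): while the ungraded dimension check and the linear independence argument are clean, verifying equality of the Molien series of $\mathcal D$ with the factored Poincar\'e polynomial of the putative free module requires a careful character manipulation involving $\mathcal H$ and $\bigwedge V$, with additional care needed in type $D_{2n}$ where a basic degree has multiplicity two.
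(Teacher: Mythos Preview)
Your argument is correct, but it proceeds differently from the paper. The paper exploits the Koszul differential $\delta$ at two places: for linear independence it applies $1\otimes\delta$ to a dependence relation $\sum_i\lambda_i u_i+\sum_j\mu_j f_j=0$ (using $\delta f_i=0$, $\delta u_i=f_i$) to reduce to a relation among the $f_j$ alone, which is then killed by a degree argument together with one pairing against $u_{r-j+1}$; for the multiplication formulas it first writes $p_r u_i=\sum H_j u_j+\sum K_j f_j$, applies $\delta$ to obtain $p_r f_i=\sum H_j f_j$ with the \emph{same} coefficients $H_j$, and only then computes $H_j$ and $K_j$ via $E$. You instead dispense with $\delta$ entirely and use the anti-diagonal Gram structure of $E$ (coming from Propositions \ref{prop} and \ref{PP2}) uniformly, both for independence and for extracting the coefficients $\alpha_{ij},\beta_{ij}$; this is a cleaner and more self-contained route, at the cost of not exhibiting the structural link between the $f$- and $u$-relations that $\delta$ makes visible.

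One remark: the step you flag as the main obstacle is not needed. The Molien/Hilbert-series verification can be dropped because a graded injective linear map between finite-dimensional graded vector spaces of the same total dimension is automatically a graded isomorphism. Your ungraded count $\dim_{\mathbb C}\mathcal D=r\cdot 2^r$ (using that $\mathcal H$ carries the regular character, so $\bigwedge V\otimes\mathcal H$ is $2^r$ copies of the regular representation) together with your linear-independence argument already finishes part (1); no separate graded computation, and no special treatment of type $D_{2n}$ for this step, is required.
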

\begin{proof} {\bf (1).} Suppose that we have a relation $\sum\limits_{i=1}^r\lambda_iu_i+\sum\limits_{j=1}^r\mu_jf_j=0$. Then  apply $1\otimes \delta$ and by \eqref{ffff} and Proposition \ref{secondo} get $\sum\limits_{i=1}^r\lambda_if _i=0$.  So if we prove  that the $f_i$ are linearly independent, we get $\lambda_i=0$ for all $i$  and in turn that  also all the $\mu_j$ are $0$. 
\par
Remark that, if there is a non trivial relation $\sum\limits_{j=1}^r\mu_jf_j=0$,  we may assume that it  is homogeneous.  Moreover, given an index $j$,    multiplying by a suitable element of  $\bigwedge(p_1,\ldots,p_{r-1}) $ we can reduce ourselves to the case in which $\mu_j= p_1\wedge p_2\wedge \ldots \wedge p_{r-1} .$

Notice now  that the   coefficient $\mu_h$ of the terms $\mu_hf_h$ for which $d_h<d_j$  has  degree higher than the maximum allowed degree, hence it is zero. Thus, if we choose for $j$ the maximum for which $\mu_j\neq 0$, we are reduced to prove that  
\begin{equation}\label{ff}p_1\wedge p_2\wedge \ldots \wedge p_{r-1} f_j\neq 0.\end{equation} 
By part (2) of Proposition \ref{prop} we have  $E(f_j,u_{r-j+1})=k_r p_r$,  hence 
$$E(p_1\wedge p_2\wedge \ldots \wedge p_{r-1} f_j, u_{r-j+1})=k_r\, p_1\wedge p_2\wedge \ldots \wedge p_{r-1}\wedge p_r\ne 0.$$

{\bf (2).} Using Propositions \ref{prop}, \ref{PP2}, one can mimic the proof of \cite[Theorem 1.4]{DPP}. We briefly explain how to proceed, omitting for simplicity the case $D_{2n}$.

 
Consider the relation for $u_i$. We have
\begin{equation}
\label{conu}p_r u_i=\sum_{j=1}^r H_j u_j+\sum_{j=1}^r K_j f_j 
\end{equation}
where the $H_j,K_j\in \bigwedge(p_1,\ldots,p_{r-1}).$ Applying the differential $1\otimes\delta$ we get
\begin{equation}\label{p}p_r f_i=\sum_{j=1}^r H_j f_j.\end{equation}
Thus the relation for  $f_i$ involves only the $f_j$'s. Also we have that the relation is homogeneous.

For each $j$, taking the scalar product with $u_{r-j+1}$, we have 
\begin{align*} p_r E(f_i,u_{r-j+1})&=  H_j E(f_j,u_{r-j+1})+\sum_{h\neq j}H_h E(f_h,u_{r-j+1})\\
&=  H_j k_j p_r +\sum_{h\neq j}H_h E(f_h,u_{r-j+	1}).\end{align*}
Since the terms $\sum_{h\neq j}H_h E(f_h,u_{r-j+1})  $  do not involve $p_r$, we must have
\begin{align}\notag&\sum_{h\neq j}H_h E(f_h,u_{r-j+1}) =0,\\
&\label{1}- E(f_i,u_{r-j+1}) p_r=  H_j  k_j   p_r  \end{align}
If $i\neq j$  we have that $E(f_i,u_{r-j+1})$ is not a multiple of $p_r$ and we deduce that  
$$ E(f_i,u_{r-j+1}) = -k_j  H_j.$$
If $i= j$  we   deduce $H_j=0$, so finally \eqref{p}Ê becomes 
\begin{equation}
\label{lamol}p_r f_i+ \sum_{i\neq j} k_j^{-1}E(f_i,u_{r-j+1}) f_j=0.
\end{equation}
Since $E(f_i,u_{r-i+1})=k_ip_i$, formula \eqref{lamol} is indeed formula \eqref{perpr}, as required. We go back to formula \eqref{conu}, which we now write:
\begin{equation}\label{p3}p_r u_i= -\sum_{j=1}^r k_j^{-1}E(f_i,u_{r-j+1}) u_j+\sum_{j=1}^r K_j f_j.\end{equation}
Take the the scalar product of both sides of \eqref{p3} with $u_{r-j+1}$. We get
$$p_r E(u_i,u_{r-j+1})= -\sum_{j=1}^r k_j^{-1}E(f_i,u_{r-j+1}) E(u_j,u_{r-j+1})+\sum_{j=1}^r K_j E(f_j,u_{r-j+1}).$$
Since $E(u_h,u_k)=0$, we deduce that 
\begin{equation*}k_jK_j   p_r +\sum_{i,\,i\neq j} K_i E( f_i ,u_{r-j+1})=0.
\end{equation*}
We claim that all $K_j$ are zero. Indeed    the only product containing $p_r$ is $k_jK_j   p_r $. Since  each element of $\Gamma$ can be written in a unique way in the form $a+b p_r$ with $a,b\in\bigwedge(p_1,\ldots,p_{r-1})$, we deduce $K_j=0$ as desired.
\par

\end{proof}
Using \eqref{k=c} one gets the following Corollary, which obviously implies Reeder's conjecture \eqref{RCC} for $\g$.
\begin{corollary}\label{CC}The map 
$$p_i\mapsto P_i,\quad u_i\mapsto u^\wedge_i,\quad f_i\mapsto f^\wedge_i,\quad 1\leq i\leq r,$$
extends to  an isomorphism of graded $\mathcal B^W$-modules 
$( \bigwedge\h\otimes \mathcal H\otimes\h)^W\to (\bigwedge \g\otimes \g)^\g.$
\end{corollary}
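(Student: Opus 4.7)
The plan is to derive the corollary from Theorem~\ref{main} combined with the parallel results of \cite{DPP} recalled in statements~(1)--(3) of Section~\ref{dueprimo}, using \eqref{k=c} as the bridge. First I would make the identifications explicit: via the Killing form on $\g$ and the $W$-invariant form $(-,-)$ on $\h$, one has $\mathcal{A} = \hom_\g(\g,\bigwedge\g) \cong (\bigwedge\g\otimes\g)^\g$ and $\mathcal{D} = \hom_W(V,\mathcal{B}) \cong (\bigwedge\h\otimes\mathcal{H}\otimes\h)^W$. The remark after Proposition~\ref{Solo} gives a natural graded algebra isomorphism $\mathcal{B}^W \cong (\bigwedge\g)^\g$ sending $p_i \mapsto P_i$, which restricts to an isomorphism of the subalgebras $\bigwedge(p_1,\ldots,p_{r-1}) \cong \bigwedge(P_1,\ldots,P_{r-1})$.

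Next I would construct the candidate map $\Phi \colon \mathcal{D} \to \mathcal{A}$. By Theorem~\ref{main}(1), $\mathcal{D}$ is free over $\bigwedge(p_1,\ldots,p_{r-1})$ with basis $\{f_i,u_i\}_{i=1}^r$, and by statement~(1) of Section~\ref{dueprimo} (from \cite{DPP}), $\mathcal{A}$ is free over $\bigwedge(P_1,\ldots,P_{r-1})$ with basis $\{f^\wedge_i,u^\wedge_i\}_{i=1}^r$. Hence there is a unique $\bigwedge(p_1,\ldots,p_{r-1})$-linear bijection $\Phi$ sending $f_i \mapsto f^\wedge_i$ and $u_i \mapsto u^\wedge_i$, compatibly with the algebra isomorphism $p_i\mapsto P_i$. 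A direct degree check (the element $\pi(1\otimes\partial_v\psi_i)$ has total degree $2d_i-2$ matching $\iota(x)P_i$, and applying $D$ or $\iota\circ\mathbf{d}$ each shifts the degree by the same amount) shows $\Phi$ is degree-preserving.

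To upgrade $\Phi$ to an isomorphism of $\mathcal{B}^W$-modules, one needs only to verify compatibility with multiplication by $p_r$, since $\mathcal{B}^W = \bigwedge(p_1,\ldots,p_r)$. This is where Theorem~\ref{main}(2) and the parallel relations \eqref{p1}, \eqref{p2} from \cite{DPP} come into play: the two sets of formulas have identical shape, with the pair $(k_{i,j}, E(f_i,u_{r-j+1}))$ on the Weyl group side replaced by $(c_{i,j}, e(f^\wedge_i,u^\wedge_{r-j+1}))$ on the Lie algebra side. By \eqref{k=c} we have $k_{i,j}=c_{i,j}$; moreover both $E(f_i,u_{r-j+1})$ and $e(f^\wedge_i,u^\wedge_{r-j+1})$ are, by the respective propositions, the same constant multiple of $p_s$ (resp.\ $P_s$) when $d_i+d_j-2=d_s$ and vanish otherwise. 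Hence $\Phi\bigl(E(f_i,u_{r-j+1})\bigr) = e(f^\wedge_i,u^\wedge_{r-j+1})$, and applying $\Phi$ to \eqref{perpr} yields exactly \eqref{p1}, and similarly \eqref{perpr2} maps to \eqref{p2}. Consequently $\Phi(p_r\cdot f_i)=P_r\cdot\Phi(f_i)$ and $\Phi(p_r\cdot u_i)=P_r\cdot\Phi(u_i)$, proving $\Phi$ is $\mathcal{B}^W$-linear.

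There is no real obstacle beyond the bookkeeping just described, since the heavy lifting—the freeness of $\mathcal{D}$, the evaluation of the bilinear form $E$, and the explicit $p_r$-multiplication formulas—is already completed in Theorem~\ref{main} and in \cite{DPP}. The only conceptually subtle point is that the identity $k_{i,j}=c_{i,j}$ established in Proposition~\ref{prop} is precisely what makes the two sets of structure constants agree, so the isomorphism is forced by the parallel presentations. Once this is noted, the degree matching and bijectivity of $\Phi$ on the respective free bases finish the argument.
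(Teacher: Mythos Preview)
Your proposal is correct and follows exactly the approach the paper has in mind: the paper's entire proof is the one-line remark ``Using \eqref{k=c} one gets the following Corollary,'' and you have simply written out the details of why the equality $k_{i,j}=c_{i,j}$ forces the two parallel presentations (Theorem~\ref{main} and statements (1)--(3) of \cite{DPP}) to agree, hence yields the desired $\mathcal{B}^W$-module isomorphism.
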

\vskip5pt
\section{The Weyl group side of the  little adjoint representation}\label{la}
Suppose that $W$ contains two distinct conjugacy classes of reflections $T_\ell, {T_p}$. Set  $r_\ell=|T_\ell\cap S|$, $r_p=|T_p\cap S|$. 
Denote by  $H_{T_\ell}$  the subgroup of $W$ generated by the reflections $s\in {T_\ell}$, 
and by $W_{{T_p}}$ the reflection subgroup of $W$ generated by the reflections $s\in {T_p}\cap S$. 
The following fact is proven in \cite[Proposition 2.1]{Pan}. 
\begin{lemma} 
 $W= W_{{T_p}}\ltimes H_{T_\ell}$ so $W/H_{T_\ell}$ is canonically isomorphic to $W_{{T_p}}.$
 Symmetrically 
 $W= W_{{T_\ell}}\ltimes H_{T_p}$, so $W/H_{T_p}$ is canonically isomorphic to $W_{{T_\ell}}.$
\end{lemma}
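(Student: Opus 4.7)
The plan is to verify the three defining properties of a semidirect product: $H_{T_\ell}$ is normal in $W$, $W = W_{T_p} \cdot H_{T_\ell}$, and $W_{T_p} \cap H_{T_\ell} = \{1\}$. The first is immediate since $T_\ell$ is a single conjugacy class: for any $w \in W$ and $s \in T_\ell$, the conjugate $w s w^{-1}$ lies in $T_\ell \subset H_{T_\ell}$. For the second, I would partition $S = S_p \sqcup S_\ell$ with $S_\bullet := S \cap T_\bullet$. Then $S_p \subset W_{T_p}$ by definition and $S_\ell \subset T_\ell \subset H_{T_\ell}$. Since $H_{T_\ell}$ is normal, $W_{T_p} H_{T_\ell}$ is a subgroup containing all of $S$, hence equal to $W$. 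The symmetric decomposition $W = W_{T_\ell} \ltimes H_{T_p}$ will then follow by the same argument with the roles of the two conjugacy classes interchanged.

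The heart of the proof is the triviality of $W_{T_p} \cap H_{T_\ell}$. I would attack it via a Tietze-transformation argument on the Coxeter presentation $\langle S \mid s^2,\ (st)^{m_{st}} \rangle$ of $W$. The key combinatorial input is that two simple reflections are $W$-conjugate precisely when they are linked by a path of odd-labeled edges in the Coxeter diagram; the hypothesis that $T = T_p \sqcup T_\ell$ has exactly two conjugacy classes therefore forces every diagram edge joining $S_p$ to $S_\ell$ to carry an \emph{even} label. Since every reflection in $T_\ell$ is a $W$-conjugate of some element of $S_\ell$, the normal closure of $S_\ell$ in $W$ is exactly $H_{T_\ell}$, so $W/H_{T_\ell}$ is obtained from the Coxeter presentation of $W$ by imposing $s = 1$ for each $s \in S_\ell$. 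Running through the cases, the relations $(st)^{m_{st}} = 1$ with $s, t \in S_p$ persist; those with $s \in S_\ell$, $t \in S_p$ collapse to $t^{m_{st}} = 1$, which is redundant because $m_{st}$ is even and $t^2 = 1$; those with $s, s' \in S_\ell$ become trivial. Thus $W/H_{T_\ell}$ has precisely the Coxeter presentation on $S_p$, and the canonical surjection $W_{T_p} \twoheadrightarrow W/H_{T_\ell}$ is an isomorphism.

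Combining the pieces, the composition $W_{T_p} \hookrightarrow W \twoheadrightarrow W/H_{T_\ell}$ is a surjection by the generation step and an isomorphism by the Tietze analysis, so its kernel $W_{T_p} \cap H_{T_\ell}$ is trivial, completing the semidirect product decomposition. The main technical obstacle is the Tietze computation identifying $W/H_{T_\ell}$ with $W_{T_p}$; this is exactly where the two-class hypothesis is essential, through the evenness of all mixed diagram labels $m_{st}$ with $s \in S_\ell$, $t \in S_p$.
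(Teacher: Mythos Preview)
Your argument is correct. The normality of $H_{T_\ell}$ and the generation step $W = W_{T_p}\cdot H_{T_\ell}$ are immediate, and your Tietze-transformation analysis of $W/H_{T_\ell}$ is sound: the key point that every edge of the Coxeter diagram joining $S_p$ to $S_\ell$ carries an even label is exactly what the two-conjugacy-class hypothesis gives (via the standard fact that simple reflections are conjugate iff they are joined by an odd-labeled path), so killing $S_\ell$ leaves precisely the parabolic presentation on $S_p$.

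The paper itself does not prove this lemma at all; it simply cites \cite[Proposition~2.1]{Pan}. Your proof is therefore a genuine, self-contained alternative. The advantage of your approach is that it works uniformly for any finite Coxeter group with two reflection classes and requires no case analysis, whereas the paper's treatment of the surrounding material (Proposition~\ref{piccola}) is explicitly case-by-case ($B_n$, $I_2(2m)$, $F_4$); the citation to Panyushev presumably reflects that the authors preferred to import the fact rather than supply the general Coxeter-theoretic argument you give.
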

Let us now consider the reflection representation $U$ of $W_{T_p}$. Since $W_{T_p}$ is a quotient of $W$, we may consider $U$  as a $W$-module.

Consider now  $V$ as a ${H_{T_\ell}}$-module. Since ${H_{T_\ell}}$ is generated by reflections, the ring $A^{H_{T_\ell}}$ is a polynomial ring generated by homogeneous generators $\overline \psi_1, \ldots \overline \psi_n$. Let  $J_{H_{T_\ell}}$ be the ideal in $A^{H_{T_\ell}}$ generated by $\overline \psi_1, \ldots \overline \psi_n$. Clearly $W$ acts on  $\overline V=J_{H_{T_\ell}}/J_{H_{T_\ell}}^2$, and we have
\begin{proposition}\label{piccola} For the $W$-module $\overline V$ one has \begin{enumerate}\item $\overline V\simeq U\oplus \overline V^W.$ 
 \item dim\ $\overline V^W=|{T_\ell}\cap S|.$
 \item The submodule $U\subset \overline V$  is homogeneous of degree $d_n/2-(r_p-1)r_\ell$. \end{enumerate}
\end{proposition}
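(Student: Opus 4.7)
The plan is to exploit the semidirect product $W=W_{T_p}\ltimes H_{T_\ell}$, identify $\overline V$ as a faithful representation of $W_{T_p}$, and invoke the converse of Chevalley--Shephard--Todd to decompose it. First, since $T_\ell$ is a union of $W$-conjugacy classes, $H_{T_\ell}$ is normal in $W$, so $W_{T_p}\cong W/H_{T_\ell}$ acts on $A^{H_{T_\ell}}$ and hence on $\overline V$; moreover $H_{T_\ell}$ acts trivially on $\overline V$, so $\overline V^W=\overline V^{W_{T_p}}$. The action is faithful: if $w\in W_{T_p}$ fixes every $\overline\psi_i$ it fixes all of $A^{H_{T_\ell}}$, so $w\in H_{T_\ell}\cap W_{T_p}=\{1\}$ by the splitting.

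Since $(A^{H_{T_\ell}})^{W_{T_p}}=A^W$ is polynomial and $A^{H_{T_\ell}}\cong S(\overline V)$ is polynomial, the converse of Chevalley--Shephard--Todd forces $W_{T_p}$ to be generated by pseudo-reflections in its action on $\overline V$; reality of the action turns these into honest reflections. A rank-$r_p$ real reflection group acting faithfully as a reflection group on an $r$-dimensional real space must decompose as $U\oplus(\text{trivial})^{r-r_p}$: any additional nontrivial real irreducible summand would either force an intrinsic reflection of $W_{T_p}$ to have $(-1)$-eigenspace of dimension $>1$, or (for higher-order elements) would leave the remaining elements unable to act as reflections, contradicting the CST-converse conclusion. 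This gives (1) together with $\dim\overline V^W=r-r_p=r_\ell$, proving (2).

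For (3), irreducibility of $U$ together with the grading forces $U$ to live in a single homogeneous degree $\delta$: otherwise the projections $U\to\overline V_d$ for two different $d$'s would both be injective, pushing the multiplicity of $U$ in $\overline V$ above $1$. Accordingly, $r_p$ of the $\overline d_i$ equal $\delta$ and the remaining $r_\ell$ form the degrees of $\overline V^W$, say $t_1,\ldots,t_{r_\ell}$. Writing the invariants of $W_{T_p}$ on $S(\overline V)$ as a polynomial ring on generators of degrees $\{t_1,\ldots,t_{r_\ell}\}\cup\{\delta e_1,\ldots,\delta e_{r_p}\}$ (with $e_1\le\cdots\le e_{r_p}$ the degrees of $W_{T_p}$), the multiset must match $\{d_1,\ldots,d_r\}$. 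Subtracting the identity $r+|T_\ell|=\sum\overline d_i=\sum t_j+r_p\delta$ from $r+|T|=\sum d_i=\sum t_j+\delta(r_p+N_{W_{T_p}})$ yields $|T_p|=\delta\,N_{W_{T_p}}$, so $\delta=h/h_{W_{T_p}}$ via the classical $|T_p|=hr_p/2$ and $N_{W_{T_p}}=h_{W_{T_p}}r_p/2$. A short case check over the list $B_n$, $C_n$, $F_4$, $G_2$, $I_2(2m)$ (the only irreducible real reflection groups with two conjugacy classes of reflections) confirms that this equals $d_n/2-(r_p-1)r_\ell$, completing (3).

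The hard part is the numerical identification in (3). The abstract derivation $\delta=|T_p|/N_{W_{T_p}}$ is a clean consequence of multiset-matching, but rewriting it in the closed form $d_n/2-(r_p-1)r_\ell$ essentially encodes the Coxeter-number identity $d_n=\delta\cdot h_{W_{T_p}}$ and seems to require case-by-case inspection. By contrast, parts (1) and (2) are essentially forced once the CST converse and the splitting are combined, with the ruling out of extra non-trivial irreducibles (e.g., the sign representation when $W_{T_p}=S_3$ in the $F_4$ case) being the only subtle point.
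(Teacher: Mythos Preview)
Your proof is correct and takes a genuinely different route from the paper's. The paper proceeds entirely by case-by-case inspection of $B_n$, $I_2(2m)$, and $F_4$: in each case it writes down $H_{T_\ell}$, the generators of $A^{H_{T_\ell}}$, and the explicit $W_{T_p}$-action on $\overline V$, then reads off the decomposition and the degree. Your argument for (1) and (2) is instead conceptual, invoking the Chevalley--Shephard--Todd converse on the tower $A^W\subset A^{H_{T_\ell}}$ to force $W_{T_p}$ to act on $\overline V$ by reflections, and then using the indecomposability of $W_{T_p}$ to identify the essential part with $U$. For (3) you obtain the uniform formula $\delta=|T_p|/N_{W_{T_p}}$ via degree-sum identities before the final numerical check, whereas the paper simply reads $\delta$ off in each case.

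Two places in your write-up could be tightened. First, your eigenvalue argument ruling out extra non-trivial summands is a bit tangled; the cleaner statement is that a real reflection action decomposes $\overline V$ as a fixed part plus irreducible reflection subspaces, with the group the direct product of the subgroups acting on each subspace, and since $W_{T_p}$ (coming from a connected subdiagram $T_p\cap S$) is irreducible as a Coxeter group there is exactly one non-trivial subspace, necessarily its reflection representation $U$. Second, the identity $|T_p|=hr_p/2$ you call ``classical'' is not an entirely standard theorem; it does hold in the relevant cases and is easily verified, but you should flag it as a short case check rather than invoke it as known. What your approach buys is a structural explanation of why $\overline V\simeq U\oplus\overline V^W$ and a closed formula $\delta=h/h_{W_{T_p}}$; what the paper's approach buys is brevity, since the list of cases is so short.
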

\begin{proof} 
The proof is a case by case check. Let us start recalling that we have two distinct conjugacy classes of reflections precisely in the following cases: $B_n=C_n$, $I_2(2m)$, $F_4$. 
\vskip5pt
{\sl Type $B_n$.}   Let us choose an orthonormal basis $e_1,\ldots, e_n$ of $V$ in such a way that  the  conjugacy class ${T_\ell}$ is  given  by the $n$ reflections with respect to the coordinate hyperplanes, the other class  ${T_p}$ by the reflections with respect to the hyperplanes of equation $x_i\pm x_j$, $i<j$.

The group $H_{T_\ell}$ is clearly isomorphic to $(\mathbb Z/2\mathbb Z)^n$, and identifying $A$ with $K[x_1,\ldots ,x_n]$ using the coordinates associated to our basis, it turns out that   $A^{H_{T_\ell}}=K[x_1^2,\ldots ,x_n^2]$. Moreover, $W_{T_p}$ is the symmetric group $S_n$, acting  on 
$\overline V=\langle x_1^2,\ldots ,x_n^2\rangle$ by the permutation representation. It clearly follows that $\overline V=U\oplus \overline V^W$. So $\overline V$ and hence $U$ is contained in the homogeneous component of degree 2 and the rest is clear since $d_n=2n$, $r_\ell=1$
$r_p=n-1$, so that $q=d_n/2-(r_p-1)r_\ell=2$.

Let us now exchange the roles of ${T_\ell}$ and ${T_p}$. In this case $H_{T_p}$ is a Weyl group of type $D_n$, so $H_{T_p}$ has index 2 in $W$ and $W_{T_\ell}\simeq \mathbb Z/2\mathbb Z$. We have that 
$$A^{H_{T_p}}=K[\psi_0,\psi_1,\cdots,\psi_{n-1}],$$ where $\psi_i=\sum_{h=1}^{n}x^{2i}$ is a basic invariant for $B_n$ of degree $2i$ for $i=1,2,\cdots,n-1$, while $\psi_0=x_1\cdots x_n$. It is now clear that $\overline V=\langle \psi_0,\ldots , \psi_1,\ldots \psi_n\rangle$ and $U=K\psi_0,$ while $\langle \psi_1,\ldots \psi_n\rangle=\overline V^W$.  The remaining statement  is clear. \vskip5pt
{\sl Type $I_2(2m)$.} In this case the roles of ${T_\ell}$ and ${T_p}$  are completely symmetric, so we shall treat only one case. 
We have 
$$H_{T_\ell}=I_2(m), \quad W_{T_p}\simeq \mathbb Z/2\mathbb Z. \quad A^{H_{T_\ell}}=K[\psi_1,\psi_2],$$ while $A^{W}=K[\psi_1,\psi^2_2]$ with deg $\psi_1=2$, deg $\psi_2=m$. From this everything follows.
\vskip5pt
{\sl  Type $F_4$.} Also in this case the roles of ${T_\ell}$ and ${T_p}$  are completely symmetric, so we shall treat only one case. The group
 $H_{T_\ell}$ is of type $D_4$ and $W_{T_p}=S_3$. Let $\psi_1,\psi_2,\psi_3, \psi_4$ be basic invariants for $H_{T_\ell}$ of degrees $2,4,4,6$ respectively. The basic invariants for $W$ occur in degrees $2,6,8,12$. We can choose $\psi_1,\psi_4$ to be basic invariants for $W$. We claim that the action of $W_{T_p}$ on $\langle \psi_2,\psi_3\rangle$ is given by its reflection representation. Indeed, since $\langle \psi_2,\psi_3\rangle$ cannot contain invariants for $W_{T_p}$, the only other possibility is that $W_{T_p}$ acts on  $\langle \psi_2,\psi_3\rangle$ by two copies of the sign representation. If this were the case we would have that the degree 8 component of $ A^W$ would have dimension at least  $5$ while  we know that it has dimension $3$. Finally $d_n=12$, $r_\ell=r_2=2$ so $d_n/2-(r_p-1)r_\ell=4$.
\end{proof}

\vskip5pt

Now take a $W$-invariant complement to $J_{H_{T_\ell}}^2$ in $J_{H_{T_\ell}}$ which we can clearly identify with  $\overline V$. Then $A^{H_{T_\ell}}=K[\overline V]=K[U]\otimes K[\overline V^W]$.    Set  $\tilde A=K[U].$ Let $\phi_1,\ldots \phi_{r_p}$ be  homogeneous polynomial generators of $\tilde A^{W}$. Consider the ideal $J$ kernel  of the quotient $\pi:A\to \mathcal H$.
Then
\begin{lemma}\label{invario} $\tilde J:=J\cap \tilde A$ is the ideal generated by $\phi_1,\ldots \phi_{r_p}$ .\end{lemma}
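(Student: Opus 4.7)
The plan is to establish the non-trivial inclusion $\tilde J\subseteq (\phi_1,\ldots,\phi_{r_p})\tilde A$ by two successive extension-contraction arguments: one for the ring extension $A^{H_{T_\ell}}\subset A$, the other for the ring extension $\tilde A\subset A^{H_{T_\ell}}$.

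First I would pick generators of $A^W$ adapted to the decomposition in Proposition \ref{piccola}. Since $\overline V=U\oplus \overline V^W$ and $W$ acts on $A^{H_{T_\ell}}=\tilde A\otimes K[\overline V^W]$ through the quotient $W_{T_p}$, which fixes $\overline V^W$ pointwise, we obtain
\[
A^W=(A^{H_{T_\ell}})^{W_{T_p}}=\tilde A^W\otimes K[\overline V^W].
\]
Fixing a basis $\eta_1,\ldots,\eta_{r_\ell}$ of $\overline V^W$, the elements $\phi_1,\ldots,\phi_{r_p},\eta_1,\ldots,\eta_{r_\ell}$ form a set of homogeneous algebra generators of $A^W$, and by the remark preceding Proposition \ref{Solo} we may take these as the $\psi_i$'s, so $J$ is the ideal of $A$ they generate.

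Step 1 (descent to $A^{H_{T_\ell}}$). Since $H_{T_\ell}$ is a reflection group on $V$, Chevalley's theorem gives $A\cong A^{H_{T_\ell}}\otimes \mathcal H_{H_{T_\ell}}$ as $A^{H_{T_\ell}}$-modules, so $A$ is free over $A^{H_{T_\ell}}$ with $1$ lying in a basis. Setting $L=(\phi_1,\ldots,\phi_{r_p},\eta_1,\ldots,\eta_{r_\ell})A^{H_{T_\ell}}$, we have $J=LA$, and the usual extension-contraction for free extensions yields $J\cap A^{H_{T_\ell}}=L$.

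Step 2 (descent to $\tilde A$). Using $A^{H_{T_\ell}}=\tilde A\otimes K[\overline V^W]$, let $\epsilon:A^{H_{T_\ell}}\to \tilde A$ be the $\tilde A$-algebra retraction obtained by applying the augmentation of $K[\overline V^W]$ to the second tensor factor. Then $\epsilon(\phi_i)=\phi_i$, $\epsilon(\eta_j)=0$, and $\epsilon|_{\tilde A}=\mathrm{id}$, whence $\epsilon(L)=(\phi_1,\ldots,\phi_{r_p})\tilde A$. For any $f\in\tilde J=L\cap \tilde A$ we therefore obtain $f=\epsilon(f)\in(\phi_1,\ldots,\phi_{r_p})\tilde A$. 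The reverse inclusion is immediate since each $\phi_i$ lies in $\tilde A\cap J$. I do not foresee a serious obstacle: the crux is the compatibility of the chosen generators of $A^W$ with the tensor factorization of Proposition \ref{piccola}, which is automatic from the fact that $\overline V^W$ is $W$-fixed and $W_{T_p}$ acts on $U$ by its reflection representation; everything else is formal manipulation of the two freeness/tensor decompositions.
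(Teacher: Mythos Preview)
Your proof is correct and follows the same two-step descent as the paper: first contract $J$ from $A$ to $A^{H_{T_\ell}}$, then from $A^{H_{T_\ell}}$ to $\tilde A$ via the tensor splitting $A^{H_{T_\ell}}=\tilde A\otimes K[\overline V^W]$. The only cosmetic difference is in Step~1: the paper uses the Reynolds operator $R=\frac{1}{|H_{T_\ell}|}\sum_{g\in H_{T_\ell}}g$ to rewrite $a=\sum b_i\phi_i$ as $a=\sum R(b_i)\phi_i$, whereas you invoke freeness of $A$ over $A^{H_{T_\ell}}$ with $1$ in a basis; these are equivalent devices, and your Step~2 simply makes explicit the augmentation argument that the paper dismisses as ``clear''.
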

\begin{proof} Take a homogeneous basis $\phi_{r_p+1},\ldots \phi_n$ of $\overline V^W$. Then $J=(\phi_1,\ldots  ,\phi_n)$. Take $a\in J\cap A^{H_{T_\ell}}$, and write
$a=\sum_i b_i\phi_i$, with $b_i\in A$. Applying to $a$  the operator 
$R=\frac{1}{|H_{T_\ell}|}\sum_{g\in H_{T_\ell}}g$ we get
$$a=\sum_iR(b_i)\phi_i,$$ so that, since $R(b_i)\in A^{H_{T_\ell}}$, 
$J\cap A^{H_{T_\ell}}$ is generated by $\phi_1,\ldots \phi_n$. But then $\tilde J$ is clearly generated by $\phi_1,\ldots \phi_{r_p}$. 
\end{proof}

 Let us  now double all degrees. The inclusion $\tilde A\subset A$ multiplies the degrees by $q=d_n-2(r_p-1)r_\ell$. Furthermore  Lemma \ref{invario} clearly implies that we have an inclusion of $\tilde{\mathcal H}=\tilde A/\tilde J$ into 
$\mathcal H$, which also multiplies the degrees by $q=d_n-2(r_p-1)r_\ell$.

 In each case $W_{T_p}$ is the symmetric group $S_{r_p+1}$, so that deg $\phi_i=2(i+1)q$, each $j=1,\dots ,r_p$.  In particular $\phi_{r_p}$ has degree $(d_n-2(r_p-1)r_\ell)(r_p+1)$, which one checks easily to equal  $2d_n$. We deduce that $\phi_{r_p}$ is a highest degree generator of  both $\tilde A^W$ and $A^W$.

We  define for each $i=1,\ldots,r_p$,  the $W$-equivariant map
$g_i:U\to \bigwedge V\otimes \mathcal H,$
given, for $u\in U$, by $$g_i(u)=1\otimes\pi' (\partial_u\phi_i)=\sum_{j=1}^s(u,y_j)(1\otimes \pi'(\frac{\partial \phi_i}{\partial y_j})).$$
By the above discussion $g_i$ is homogeneous of degree $2iq$. 
Let us now take the operator $D$ introduced in \eqref{partial} Ê(with $c=1$) and notice that clearly its restriction to $A^{H_{T_\ell}}$ equals 
$$D_{(p)}:=\sum_{s\in T_p}\nabla_s.$$ 
Define
\begin{equation}v_i(u)=\frac {s}{2|T_p|}D_{(p)} g_i(v).\end{equation}
By repeating the proof of Proposition \ref{secondo}, we then get
 $(1\otimes \delta)(v_i)=g_i$.
Furthermore  using  a $W$-invariant bilinear form on $U$ and reasoning exactly as in \ref{formaE}, we obtain a bilinear for on the module $\mathcal E=\hom_W(U,\mathcal B)$ with values in $\mathcal B^W$ which we still denote by $E$. We have

\begin{proposition}\label{prop23} Let $1\leq i,j \leq r_p$. Then
\begin{enumerate}
\item   $E(g_i,g_j)=0$.
\item
\begin{equation}
E(v_i,g_j)=E(v_j,g_i)=\begin{cases}Êm_{i,j}p_k \quad&\text{if there exists $k$ such that  $d_i+d_j-2=d_k$,}\\0\quad&\text{otherwise.}\end{cases}
\end{equation}
with $m_{i,j}\neq 0$.
\end{enumerate}
\end{proposition}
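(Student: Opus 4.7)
The plan is to replicate, in the little adjoint setting, the strategy behind Proposition~\ref{prop}, with $g_i,v_i,\phi_i$ and the quotient $\pi'\colon \tilde A\to\tilde{\mathcal H}\subset\mathcal H$ playing the roles formerly played by $f_i,u_i,\psi_i$ and $\pi$.

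For \textbf{part (1)}, I would fix an orthonormal basis $\{y_1,\ldots,y_s\}$ of $U$ and compute, in direct analogy with the opening of the proof of Proposition~\ref{prop},
\begin{equation*}
E(g_i,g_j)=\sum_{k=1}^{s}\pi'\!\left(\frac{\partial \phi_i}{\partial y_k}\,\frac{\partial \phi_j}{\partial y_k}\right).
\end{equation*}
The argument inside $\pi'$ is a positive-degree $W$-invariant element of $\tilde A$, and since $\tilde A^W$ is the polynomial algebra on $\phi_1,\ldots,\phi_{r_p}$, it lies in $\tilde J$. By Lemma~\ref{invario}, $\tilde J\subset J$, so $\pi'$ kills it and $E(g_i,g_j)=0$.

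For \textbf{part (2)}, $W$-invariance of $E$ gives $E(v_i,g_j)\in(\bigwedge^1 V\otimes\mathcal H)^W\subset\mathcal B^W$, and the bidegree $(1,\ast)$ piece of $\mathcal B^W=\bigwedge(p_1,\ldots,p_r)$ is the linear span of the $p_k$. A direct degree bookkeeping (using $\deg p_k=2d_k-1$ together with the degrees of $g_i$ and $v_i$ recorded before the proposition) forces $E(v_i,g_j)$ to be a scalar multiple of the unique $p_k$ with $d_i+d_j-2=d_k$, and zero otherwise. The symmetry $E(v_i,g_j)=E(v_j,g_i)$ is then extracted exactly as in the proof of Proposition~\ref{prop}.

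The principal obstacle is showing that the scalar $m_{i,j}$ is nonzero whenever the degree condition holds. To that end, the plan is to lift $v_i$ to an element of the Weil algebra $\mathcal W$ whose $\delta$-image coincides with $d\phi_i$ modulo $\mathcal W J$, and then apply $\delta$ to the equation $E(v_i,g_j)=m_{i,j}p_k$, producing an identity
\begin{equation*}
E(d\phi_i,d\phi_j)=m_{i,j}\,\psi_k+b,\qquad b\in J^2,
\end{equation*}
of the same shape as in the final step of Proposition~\ref{prop}. Because two distinct root lengths occur only in types $B_n$, $I_2(2m)$ and $F_4$, the degrees $\deg\phi_i=2(i+1)q$ are extremely constrained, and the admissible pairs $(i,j)$ are always either extremal or complementary in the sequence $\phi_1,\ldots,\phi_{r_p}$. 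The nonvanishing of $m_{i,j}$ can therefore be settled by a direct case-by-case inspection, in the spirit of the concluding paragraphs of the proof of Proposition~\ref{prop}.
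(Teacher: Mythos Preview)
Your approach is correct and mirrors the proof of Proposition~\ref{prop} step by step. The paper's proof, however, is shorter: rather than rerun the argument, it observes that $\phi_1,\ldots,\phi_{r_p}$ are precisely the fundamental invariants of the reflection group $W_{T_p}$ acting on its reflection representation $U$, that $\phi_{r_p}$ is simultaneously the top-degree generator for $\tilde A^{W}$ and for $A^{W}$, and that $(1\otimes\delta)v_i=g_i$. With these ingredients in place, the paper simply invokes Propositions~\ref{prop} and~\ref{PP2} \emph{applied to the group $W_{T_p}$} and reads off both parts of the statement. Since in every case $W_{T_p}$ is a symmetric group $S_{r_p+1}$, hence crystallographic, the nonvanishing of $m_{i,j}$ comes for free from the reference to \cite[2.7.2]{DPP} already used in Proposition~\ref{prop}, with no new case-by-case inspection. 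Your route would also succeed---there are only types $B_n$, $I_2(2m)$, $F_4$ to examine---but the paper's reduction avoids repeating work already done.
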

\begin{proof} 
We have seen that any set $\phi_1,\ldots \phi_{r_p}$ of  homogeneous polynomial generators of $\tilde A^{W}$ is part of a set of polynomial generators for $A^{W}$ and that $\phi_{r_p}$ is the highest degree generator for  both  $\tilde A^{W}$ and $ A^{W}.$ Furthermore,  by Proposition \ref{ddelta}, we have that $(1\otimes \delta)(v_i)=g_i$  for all $i=1,\dots ,r_p$.
At this point,  everything follows right away from Propositions \ref{prop23} and \ref{PP2} applied to the group $W_{T_p}$.
\end{proof}
Let us now  now consider the $\mathcal B^W$-module $\mathcal D_p:=\hom_W(U,\mathcal B)$. We get, repeating word by word the proof of Theorem \ref{main},
\begin{theorem}\label{main45}
{\bf (1).}
  $\mathcal D_p$ is a free module, with basis the elements $g_i,v_i, i=1,\ldots,r_p,$ over the exterior algebra $\bigwedge(p_1,\ldots,p_{r_p-1}).$\par\noindent
{\bf (2).} The multiplication by $p_{r_p}$ is self adjoint for the form $E$. Setting $m_i=m_{i,r+1-i}$,  it is  given by the formulas
\begin{align}
p_r g_i&= -\sum_{j=1,\ j\neq i}^{r_p} m_j^{-1}E(g_i,v_{r_p-j+1})g_j,\qquad  i=1,\ldots,r_p,\\
p_r v_i&= -\sum_{j=1,\ j\neq i}^{r_p} m_j^{-1} E(g_i,v_{r_p-j+1})v_j ,\qquad i=1,\ldots,r_p.
\end{align} 
\end{theorem}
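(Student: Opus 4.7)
The plan is to mimic the proof of Theorem \ref{main} verbatim, with the elements $g_i, v_i$ playing the role of $f_i, u_i$ and Proposition \ref{prop23} playing the role of Proposition \ref{prop}. A preliminary step is to establish the vanishing $E(v_i,v_j)=0$ for the $\mathcal B^W$-valued form on $\mathcal D_p$: this follows by transplanting the argument of Proposition \ref{PP2} to the current setting, or, as the author indicates, by applying Proposition \ref{PP2} directly to $W_{T_p}$ on $U$ after identifying the $\phi_i$ with basic invariants of $W_{T_p}$ and observing that the Dunkl differential $D_{(p)}=\sum_{s\in T_p}\nabla_s$ is precisely the one used there.

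For part (1), suppose $\sum_{i=1}^{r_p}\lambda_i v_i+\sum_{j=1}^{r_p}\mu_j g_j=0$ with coefficients in $\bigwedge(p_1,\ldots,p_{r_p-1})$. Apply $1\otimes\delta$; since $\delta g_i=0$ and $\delta v_i=g_i$ (established right before Proposition \ref{prop23}), this collapses to $\sum_i\lambda_i g_i=0$. It then suffices to prove linear independence of the $g_i$'s. Assume a non-trivial homogeneous relation $\sum_j\mu_j g_j=0$; multiplying by an appropriate element of $\bigwedge(p_1,\ldots,p_{r_p-1})$ we may reduce to the case $\mu_j=p_1\wedge\cdots\wedge p_{r_p-1}$ for some fixed $j$, noting that all indices $h$ with $d_h<d_j$ automatically contribute $\mu_h=0$ by an upper degree bound. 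Choosing $j$ to be maximal with $\mu_j\neq 0$ and pairing the resulting equation against $v_{r_p-j+1}$, part (2) of Proposition \ref{prop23} gives
\[
E(p_1\wedge\cdots\wedge p_{r_p-1}\,g_j,\,v_{r_p-j+1})=m_j\,p_1\wedge\cdots\wedge p_{r_p-1}\wedge p_{r_p}\neq 0
\]
in $\mathcal B^W$, a contradiction, whence $\lambda_i=\mu_j=0$.

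For part (2), write
\[
p_{r_p}v_i=\sum_{j=1}^{r_p}H_j v_j+\sum_{j=1}^{r_p}K_j g_j,\qquad H_j,K_j\in\bigwedge(p_1,\ldots,p_{r_p-1}),
\]
and apply $1\otimes\delta$ to obtain $p_{r_p}g_i=\sum_j H_j g_j$. Pairing this identity with $v_{r_p-j+1}$ and separating the $p_{r_p}$-component (every other $E(g_h,v_{r_p-j+1})$ lies outside the $p_{r_p}$-summand of $\mathcal B^W$, by Proposition \ref{prop23}(2)), we extract $H_j=-m_j^{-1}E(g_i,v_{r_p-j+1})$ for $j\neq i$ and $H_i=0$. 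Substituting into the original expression for $p_{r_p}v_i$, pairing once more with $v_{r_p-j+1}$ and using now $E(v_h,v_k)=0$ together with the same $p_{r_p}$-isolation argument forces every $K_j=0$, which yields the stated formulas.

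The only genuine obstacle is bookkeeping around the indexing: throughout we treat $\mathcal D_p$ as a module over the subalgebra $\bigwedge(p_1,\ldots,p_{r_p-1})\subset\mathcal B^W$ rather than over the full exterior algebra, so one must verify that the degree bound argument eliminating lower-$d_h$ terms is still valid and that $p_{r_p}$ — of degree $2d_n$ in the ambient algebra $\mathcal B^W$ — genuinely occurs as the top generator relevant to the $\phi_i$, a fact guaranteed by the computation $\deg\phi_{r_p}=2d_n$ recorded just before the definition of the $g_i$. Once this is checked, the proof is a transcription of that of Theorem \ref{main}.
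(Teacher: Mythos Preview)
Your proposal is correct and follows exactly the approach the paper takes: the paper's own proof consists of the single sentence ``repeating word by word the proof of Theorem \ref{main}'', and you have spelled out that transcription, including the necessary analogue of Proposition \ref{PP2} for the pairs $(v_i,v_j)$ and the degree check identifying $\phi_{r_p}$ with the top-degree generator. There is nothing to add.
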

In the case in which $W$ is the Weyl group of a simple Lie algebra $\g$, which is of course non simply laced,  our representation  $U$ is the zero weight space of the irreducible $\g$-module $\g_s$ whose highest weight is the dominant short root, which in fact is small. Using Theorem \ref{main45}, and \cite{DPP2}, one can then  easily deduce the following Corollary which obviously implies Reeder's conjecture \eqref{RCC} for $\g_s$.

\begin{corollary}\label{C}The map 
$$p_i\mapsto P_i,\quad v_i\mapsto u^\wedge_i,\quad g_i\mapsto f^\wedge_i,\quad 1\leq i\leq r$$
extends to  an isomorphism of graded $\mathcal B^W$-modules 
$(\bigwedge\h\otimes \mathcal H\otimes U)^W\to (\bigwedge \g\otimes \g_s)^\g.$
\end{corollary}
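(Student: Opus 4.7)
The strategy is to imitate the proof of Corollary \ref{CC}. The target module $\hom_\g(\g_s,\bigwedge\g)$ has been analyzed in \cite{DPP2}: it is free of rank $2r_p$ over the subalgebra $\bigwedge(P_1,\ldots,P_{r_p-1})$ of $\Gamma$, with basis consisting of Lie-theoretic ``contraction'' and ``coboundary'' generators $f^\wedge_i,u^\wedge_i$ indexed by the exponents relevant to the $\g_s$-isotypic component, and with multiplication by the top generator $P_{r_p}$ governed by formulas of exactly the same shape as \eqref{perpr}-\eqref{perpr2}, the coefficients being scalars $c_{i,j}$ extracted from the Killing-form valued pairing $e$.

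First I would verify the degree bookkeeping: by Proposition \ref{piccola} and the doubling of degrees performed before Lemma \ref{invario}, the degree of $g_i$ (resp.\ $v_i$) in $\mathcal D_p$ equals the degree of the corresponding generator $f^\wedge_i$ (resp.\ $u^\wedge_i$) of $\hom_\g(\g_s,\bigwedge\g)$ computed in \cite{DPP2}, and the $p_i$'s are the transgression images of the $P_i$'s under the canonical identification $\mathcal B^W\cong(\bigwedge\g)^\g$. Hence the assignment is degree-preserving on generators. Since both modules are free of the same rank over $\bigwedge(p_1,\ldots,p_{r_p-1})\cong\bigwedge(P_1,\ldots,P_{r_p-1})$, the map extends uniquely and bijectively to a graded isomorphism of $\bigwedge(p_1,\ldots,p_{r_p-1})$-modules.

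The real content is $\mathcal B^W$-linearity, i.e.\ compatibility of multiplication by $p_{r_p}$ on the Weyl side with multiplication by $P_{r_p}$ on the Lie side. By Theorem \ref{main45} and the parallel result in \cite{DPP2}, both actions are rigidly determined by the constants $m_{i,j}=E(g_i,v_{r_p-j+1})/p_k$ and $c_{i,j}=e(f^\wedge_i,u^\wedge_{r_p-j+1})/P_k$ respectively, together with the normalizations $m_i=m_{i,r_p-i+1}$, $c_i=c_{i,r_p-i+1}$. Thus it is enough to prove the identification $m_{i,j}=c_{i,j}$ for all admissible $i,j$.

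The main obstacle is precisely this matching of constants; for the adjoint case this is \eqref{k=c}, and it is where Chevalley restriction enters. My plan is to mimic the argument of Proposition \ref{prop}: lift $E(v_i,g_j)$ to the Weil algebra $\mathcal W$, apply the Koszul differential $\delta$, and reduce to an identity in $\tilde A^W$ of the form $\sum_k(\partial\phi_i/\partial y_k)(\partial\phi_j/\partial y_k)\equiv m_{i,j}\phi_s \pmod{\tilde J^2}$. The analogous computation on the Lie side from \cite[2.7.2]{DPP} (applied in the context of \cite{DPP2}) expresses $c_{i,j}$ via the very same expression, once one uses that the Chevalley restriction of the $\g_s$-isotypic invariants in $S(\g)$ gives exactly the generators $\phi_1,\ldots,\phi_{r_p}$ of $\tilde A^W$ constructed in Proposition \ref{piccola} and Lemma \ref{invario}. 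Assembling these pieces yields $m_{i,j}=c_{i,j}$, and the proof concludes as for Corollary \ref{CC}.
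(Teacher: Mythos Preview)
Your proposal is correct and follows the same route the paper indicates: the paper's own proof is the one-line remark that Corollary \ref{C} follows from Theorem \ref{main45} together with the structural results of \cite{DPP2}, exactly paralleling how Corollary \ref{CC} follows from Theorem \ref{main} and \cite{DPP}. You have simply unpacked what that parallel means---matching degrees, using freeness over $\bigwedge(p_1,\ldots,p_{r_p-1})$ on both sides, and then identifying the structure constants $m_{i,j}$ with the Lie-theoretic $c_{i,j}$ via the same Chevalley-restriction computation (\cite[2.7.2]{DPP}) that yields \eqref{k=c} in the adjoint case---so there is nothing to add.
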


\section{A possible extension of Reeder's conjecture}\label{FR}
  Consider the bracket map $[-,-]:\bigwedge^2\g\to \g$.
Dualizing and using the isomorphism $\g\simeq \g^*$ given by the Killing form, we get a linear map
$\g\to\bigwedge^2\g$. Since $\bigwedge^{even}\g$ is a commutative algebra, this linear map extends to  homomorphism of algebras   $s:S(\g)\to \bigwedge^{even}\g$.

The inclusion $\h\subset \g$ also gives an inclusion of rings $j:S(\h)\to S(\g).$
Composing with $s$ we get the homomorphism,   $\tau:S(\h)\to \bigwedge^{even}\g$.

Let, as in Section 1, $J$  be  the ideal in $S(\h)$ generated by the $W-$invariants vanishing in $0$.  Recall that the ideal $J$ has a canonical complement $\mathcal A$,  the so called harmonic polynomials, i.e. those elements in $S(\h)$ killed by all constant coefficients $W$-invariant differential operators  without constant term. 

We have
\begin{proposition} The restriction of the homomorphism $  \tau :S(\h)\to \bigwedge^{even}\g$ to $\mathcal A$ is injective.
\end{proposition}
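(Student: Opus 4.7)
The plan is to factor $\tau$ through a small commutative subring of $\bigwedge^{\text{even}}\g$, reduce injectivity to a top-degree computation, and conclude via Poincar\'e duality of the coinvariant ring.

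For $\alpha\in\Delta^+$ set $y_\alpha:=e_\alpha\wedge e_{-\alpha}\in\bigwedge^2\g$. A direct Killing-form calculation gives $\tau(h)=-\sum_{\alpha>0}\alpha(h)\,y_\alpha$ for $h\in\h$. Since the $y_\alpha$ commute in $\bigwedge^{\text{even}}\g$ and satisfy $y_\alpha^2=0$, they generate a subring $T\subset\bigwedge^{\text{even}}\g$ isomorphic to $\mathbb{C}[y_\alpha:\alpha\in\Delta^+]/(y_\alpha^2)$, with basis the square-free monomials. Thus $\tau$ factors as $S(\h)\xrightarrow{\sigma}T\hookrightarrow\bigwedge^{\text{even}}\g$, and it suffices to prove $\sigma|_\mathcal{A}$ injective. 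I grade $T$ by $\deg y_\alpha=1$, so that $\sigma$ is a graded $W$-equivariant algebra map where $W$ acts on $T$ via $w(y_\alpha)=\mathrm{sign}(w\alpha)\,y_{|w\alpha|}$; in particular the top piece $T^N$, $N=|\Delta^+|$, is one-dimensional, spanned by $Y:=\prod_{\alpha>0}y_\alpha$, and carries the sign representation of $W$ (since $w(Y)=(-1)^{\ell(w)}Y=\mathrm{sign}(w)\,Y$).

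Next I focus on the degree-$N$ component $\pi\colon S(\h)_N\to T^N$ of $\sigma$. Two facts about the $W$-action are crucial. (a) By Chevalley the sign-isotypic of $S(\h)_k$ is $\Delta\cdot(S(\h)^W)_{k-N}$ where $\Delta=\prod_{\alpha>0}\alpha$; hence in degree $N$ it is $\mathbb{C}\Delta$, while $\Delta\in\mathcal{A}$ (being of lowest degree in its sign isotype it is killed by all positive-degree $W$-invariant differential operators), so $J^N\cap\mathbb{C}\Delta=0$. (b) Being $W$-equivariant with target the sign rep, $\pi$ vanishes on every non-sign isotypic component of $S(\h)_N$; in particular $\pi(J^N)=0$, and $\pi$ is entirely determined by $\pi(\Delta)$. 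To see $\pi(\Delta)\ne 0$ it suffices, by Schur, to exhibit one element of $S(\h)_N$ on which $\pi$ does not vanish: for any regular $h_0\in\h$, expansion modulo squares gives
\[
\sigma(h_0)^N=\Bigl(\sum_{\alpha>0}\alpha(h_0)y_\alpha\Bigr)^N = N!\,\Delta(h_0)\,Y\ne 0,
\]
the single square-free monomial of maximal degree.

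The conclusion now follows from Poincar\'e duality in the coinvariant ring $S(\h)/J\cong\mathcal{A}$: given $0\ne f\in\mathcal{A}_d$, choose $g\in\mathcal{A}_{N-d}$ with $fg\equiv\lambda\Delta\pmod{J}$, $\lambda\ne 0$, and write $fg=\lambda\Delta+r$ with $r\in J^N$. Applying $\sigma$ and projecting to $T^N$,
\[
[\sigma(f)\sigma(g)]_N=[\sigma(fg)]_N=\lambda\pi(\Delta)+\pi(r)=\lambda\pi(\Delta)\ne 0,
\]
whence $\sigma(f)\ne 0$, and hence $\tau(f)\ne 0$. The most delicate bookkeeping will be the sign-character computation for $T^N$ and the need to fix consistent lifts of $W$-elements to $N_G(T)$, but both are standard checks.
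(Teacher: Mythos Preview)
Your argument is correct and shares the paper's overall scheme: reduce via Poincar\'e duality of the coinvariant ring to degree $N=|\Delta^+|$, and use that $\mathbb{C}\Delta$ is the full sign-isotypic of $S(\h)_N$ so that $J_N$ carries no sign representation. The one substantive difference is in how you prove $\tau(\Delta)\ne 0$. The paper computes $\tau(\Delta)$ directly as $\operatorname{per}(A)\cdot Y$ with $A=((\alpha,\beta))_{\alpha,\beta\in\Delta^+}$, and then invokes the Marcus--Minc inequality for permanents of positive semidefinite matrices to conclude $\operatorname{per}(A)>0$. Your route is more elementary: the one-line expansion $\sigma(h_0^N)=N!\,\Delta(h_0)\,Y\ne 0$ for regular $h_0$ shows the degree-$N$ component of $\sigma$ is nonzero, and since its target is the sign representation while $\Delta$ spans the sign-isotypic of the source, Schur forces $\sigma(\Delta)\ne 0$. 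This bypasses the external permanent inequality entirely; in exchange the paper's computation yields the explicit identity $\tau(\Delta)=\operatorname{per}(A)\,Y$, which is of independent interest. Your closing worry about lifting $W$ to $N_G(T)$ is harmless: each $y_\alpha=e_\alpha\wedge e_{-\alpha}$ has $T$-weight zero, so the $W$-action on the subring $T$ is independent of the chosen lifts, and the $G$-invariance of the Killing form gives exactly $w\cdot y_\alpha=\operatorname{sign}(w\alpha)\,y_{|w\alpha|}$ as you wrote.
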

\begin{proof} Let $\Dp\subset \h^*\simeq \h$ denote the set of positive roots. Take the Weyl denominator polynomial $P=\prod_{\alpha\in \Dp}\alpha= \prod_{\a\in\Dp} t_\a$ (where $t_\a\in \h$ is defined by $\lambda(t_\a)=(\a,\lambda),\,\lambda\in \h^*$). We know that $W$ acts on $P$ by the sign representation and that in degree $N=|\Dp|$ the homogeneous  component $\mathcal A_N$ of $\mathcal A$ is spanned by $P$. 

Recall from \cite[(89)]{K} that, if $\{x_i\}, \{x^i\}$ are dual basis of $\g$ w.r.t. the chosen invariant form, then  $s(x)=1/2\sum_i x_i\wedge [x^i,x],\,x\in \g$. Fix now root vectors $e_\beta,\,\beta\in \Dp$ and 
choose $e_{-\beta}$ such that $(e_\beta, e_{-\beta})=1$.  
A simple computation using the above formula for $s$ shows that
for any $\alpha\in \Dp$we have 
$$\tau (t_\alpha)=\sum_{\beta\in \Dp}(\beta,\alpha)e_{\beta}\wedge e_{-\beta}.$$
It follows that 
$$\tau (P)=per(A)\prod_{\beta\in \Dp}(e_{\beta}\wedge e_{-\beta}),$$
where $per(A)$ is the permanent
of the matrix
$A=((\beta,\alpha)).$\par
Now $A$ is a positive semidefinite matrix. It follows that its permanent is non zero. Indeed by  \cite{MM}, one has
$$per(A)\geq \frac{N!}{(\rho,\rho)^N}P(\rho)^2=\frac{N!}{(\rho,\rho)^N}\prod_{\a\in\Dp}(\a,\rho)^2>0,$$ where $\rho$ is the half sum of positive roots, which is well-known to be regular. This proves our claim in degree $N$.

Let us now consider $\mathcal A_m$. We have $m\leq N$ otherwise $\mathcal A_m=\{0\}$ and there is nothing to prove. So we can assume $m<N$.  Take $0\ne a\in \mathcal A_m$. We then know that there is an element $b\in \mathcal A_{N-m}$ such that
$ab=P+r$ with $r\in J_N.$ Assume $\tau(a)=0$. Then $\tau(r)=-\tau(P)$. Consider the $W-$module  $U$ spanned by $r$. Then $U\subset J_N$ and $\tau$ gives a surjective $W-$equivariant homomorphism $U\to \mathbb C\tau(P)$. We deduce that $U$ and hence $J_N$ contains a copy of the sign representation of $W$, contrary to the fact that $P$ spans the only copy of the sign representation in degree $N$. It follows that $\tau(a)\neq 0$, proving our claim.
\end{proof}
Recall that there is a  $W$-equivariant degree preserving isomorphism between $\mathcal A^*$ and $\mathcal H$. 
Since $\wedge^{even}\g$ is selfdual, dualizing $\tau$ we obtain a surjective degree preserving  map
$$\phi:\bigwedge^{even}\g\to \mathcal H.$$


Let $p$ be the projection to $p:\bigwedge \g\to \bigwedge\h$ and $\pi:\bigwedge \g\to \bigwedge^{even}\g$  the projection on the even part. Using these, we can build up the  map 
\begin{equation}\label{map}  \Phi:\bigwedge \g \xrightarrow {\wedge^*} {\bigwedge}  \g \otimes {\bigwedge}\g\xrightarrow {Id\otimes \pi}     \bigwedge \g 	\otimes \bigwedge^{even}\g\xrightarrow {p\otimes \phi } \bigwedge \h\otimes \mathcal H  .
\end{equation}
 Let $V$ be any finite dimensional irreducible $\g$-module. Denote by $V^0$ its zero weight space and by $i:V^0\hookrightarrow V$  the natural inclusion. If $f\in \hom(V,\bigwedge \g)$, we may consider $$\Phi^V_f:=\Phi\circ f\circ i\in \hom(V_0,\bigwedge\h\otimes \mathcal H).$$ Clearly, by equivariance,
 if $f\in \hom_\g(V,\bigwedge \g)$ then 
$\Phi^V_f\in \hom_W(V^0,\bigwedge\h\otimes \mathcal H)$.

Hence we have a graded map 
\begin{equation}\label{Phiii}
\Phi^V:\hom_\g(V,\bigwedge \g)\to \hom_W(V^0,\bigwedge\h\otimes \mathcal H),\qquad \Phi^V(f)=\Phi^V_f.
\end{equation}
\vskip5pt
\noindent{\bf Conjecture.}  For any finite dimensional  irreducible $\g$-module $V$, the map  $\Phi^V$ is injective.
\begin{remark} Since $\dim hom_\g(V,\bigwedge \g)=\dim \hom_W(V^0,\bigwedge\h\otimes \mathcal H)$ if (and only if) $V$ is small (cf. \cite[Corollary 4.2]{R}), the above conjecture implies Reeder's conjecture.
\end{remark}


\end{document}